\newcommand{\BibTeX}{{\scshape Bib}\kern-.08em\TeX}
\newcommand{\T}{\S\kern .15em\relax }
\newcommand{\AMS}{$\mathcal{A}$\kern-.1667em\lower.5ex\hbox
        {$\mathcal{M}$}\kern-.125em$\mathcal{S}$}
\newcommand{\NE}{\operatorname{NE}}
\newcommand{\Exc}{\operatorname{Exc}}
\title{A characterization of some Fano 4-folds through conic fibrations}
\subjclass{14E08, 14E30, 14J35, 14J45}
\author{Pedro \textsc{Montero}}
\address{Departamento de Matem\'atica, Universidad T\'ecnica Federico Santa Mar\'ia, Valpara\'iso, Chile.}
\email{pedro.montero@usm.cl}
\author{Eleonora Anna \textsc{Romano}}
\address{Institute of Mathematics, University of Warsaw, Warszawa, Poland.}
\email{elrom@mimuw.edu.pl}
\begin{document}
\def\smfbyname{}

\maketitle

\begin{abstract}
We find a characterization for Fano 4-folds $X$ with Lefschetz defect $\delta_{X}=3$: besides the product of two del Pezzo surfaces, they correspond to varieties admitting a conic bundle structure $f\colon X\to Y$ with $\rho_{X}-\rho_{Y}=3$. Moreover, we observe that all of these varieties are rational. We give the list of all possible targets of such contractions. Combining our results with the classification of toric Fano $4$-folds due to Batyrev and Sato we provide explicit examples of Fano conic bu
%
%
ndles from toric $4$-folds with $\delta_{X}=3$. 
\end{abstract}

\tableofcontents

\pagebreak
\section{Introduction}

Let $X$ be a complex smooth Fano variety of dimension $n$. 
Let us denote by $\mathcal{N}_{1}(X)$ the $\mathbb{R}$-vector space of one-cycles with real coefficients, modulo numerical equivalence, whose dimension is the \textit{Picard number} $\rho_{X}$. 

Let $D\subset X$ be a prime divisor. The inclusion $i\colon D\hookrightarrow X$ induces a pushforward of one-cycles $i_{*}\colon \mathcal{N}_{1}(D)\to \mathcal{N}_{1}(X)$. We set $\mathcal{N}_{1}(D,X):=i_{*}(\mathcal{N}_{1}(D))\subseteq \mathcal{N}_{1}(X)$, which is the linear subspace of $\mathcal{N}_{1}(X)$ spanned by numerical classes of curves contained in $D$. In \cite{CAS1} Casagrande introduced the following invariant, called \textit{Lefschetz defect}: 
\begin{center}
$\delta_{X}:=\max\{\operatorname{codim} \mathcal{N}_{1}(D,X)\;|\; D\subset X \text{ prime divisor} \}$. 
\end{center}

This invariant allows to deduce many features about the variety. Indeed, in \cite[Theorem 1.1]{CAS1} Casagrande proved that if $\delta_{X}\geq 4$ then $X\cong S\times T$, where $S$ is a del Pezzo surface and $T$ is a $(n-2)$-dimensional smooth Fano variety. We refer the reader to \cite{CAS1} and \cite{NOCE} for the properties of $\delta_{X}$. In \cite{IO} Romano studied non-elementary Fano conic bundles, namely fiber type contractions $f\colon X\to Y$ with $X$ as above, such that the fibers of $f$ are one-dimensional, and $\rho_{X}-\rho_{Y}>1$. We recall that in this case $Y$ is a smooth variety by \cite[Theorem 3.1]{AND85} (see also \cite[Theorem 3.1.3]{TESI}).

An important point for our purposes is that if $X$ admits a conic bundle structure $f\colon X\to Y$ with relative Picard dimension $r:=\rho_{X}-\rho_{Y}\geq 3$, there is a relation between $r$ and $\delta_{X}$. More precisely, it is possible to find some lower-bounds for $\delta_{X}$ in terms of the relative Picard dimension of $f$. For instance, under this assumption we get $\delta_{X}\geq r-1$ by \cite[Lemma 3.10]{IO}. 

In this paper we focus on the case in which $X$ is a Fano $4$-fold, namely a $4$-dimensional complex smooth Fano variety. Our main goal is to find a characterization in terms of the Lefschetz defect of some Fano $4$-folds admitting a conic fibration. 

The first case of main interest is when $\delta_{X}=3$. Indeed if $\delta_{X}\geq 4$, we have already observed that $X\cong S_{1}\times S_{2}$ where each $S_{i}$ is a del Pezzo surface. In this situation all conic bundle structures are well known, and it is easy to find bounds for $\delta_{X}$. We refer the reader to Proposition \ref{prop2} and Remark \ref{case_product} for details. 

When $X$ is a Fano manifold with $\delta_{X}=3$, in the proof of \cite[Theorem 1.1 (ii)]{CAS1} Casagrande showed that there exists a conic bundle structure $f\colon X\to Y$ where $\rho_{X}-\rho_{Y}=3$. In this paper we prove that the converse holds for Fano $4$-folds such that $X\ncong S_{1}\times S_{2}$, where each $S_{i}$ is a del Pezzo surface. Combining what is already known with our results, we formulate the main Theorem of this paper as follows.

\begin{theo} \label{main_thm} Let $X$ be a Fano 4-fold such that $X\ncong S_{1}\times S_{2}$, where each $S_{i}$ is a del Pezzo surface. Then $\delta_{X}=3$ if and only if there exists a conic bundle $f\colon X\to Y$ such that $\rho_{X}-\rho_{Y}=3$. 
\end{theo}

As we have already stressed, in order to prove Theorem \ref{main_thm} we are left to investigate in more details conic bundles of Fano $4$-folds, with relative Picard dimension $3$. By \cite[Corollary of Proposition 4.3]{WIS} the target of such contraction is a Fano $3$-fold. One of the main ingredient is the following result where we give the list of all possible targets of such contractions.

\begin{prop} \label{prop1} Let $f\colon X\to Y$ be a Fano conic bundle, where $\dim{(X)}=4$ and $\rho_{X}-\rho_{Y}=3$. Then $5\leq\rho_{X}\leq 13$. Moreover:
\begin{itemize}	
\item [(a)] If $\rho_{X}=5$, then $Y$ is one of the following Fano $3$-folds: $Y\cong \mathbb{P}^{1}\times \mathbb{P}^{2}$; {$Y\cong \mathbb{P}_{\mathbb{P}^{2}}(\mathcal{O}\oplus \mathcal{O}(1))$}; $Y\cong \mathbb{P}_{\mathbb{P}^{2}}(\mathcal{O}\oplus \mathcal{O}(2))$. 

\item [(b)] If $\rho_{X}=6$, then $Y$ is one of the following Fano $3$-folds: $Y\cong \mathbb{P}^{1}\times\mathbb{P}^{1}\times \mathbb{P}^{1}$; $Y\cong \mathbb{F}_{1}\times \mathbb{P}^{1}$; $Y\cong \mathbb{P}_{\mathbb{P}^{1}\times \mathbb{P}^{1}}(\mathcal{O}(-1,-1)\oplus \mathcal{O})$; $Y\cong \mathbb{P}_{\mathbb{P}^{1}\times \mathbb{P}^{1}}(\mathcal{O}(0,-1)\oplus \mathcal{O}(-1,0))$. 

\item [(c)] If $\rho_{X}\geq 7$, then $X\cong S_{1}\times S$, where $S_{1}$ is a del Pezzo surface with $\rho_{S_{1}}=4$, $Y\cong \mathbb{P}^{1}\times S$, and $f$ is induced by a conic bundle $S_{1}\to \mathbb{P}^{1}$.
\end{itemize}
\end{prop}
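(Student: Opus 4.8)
The plan is to reduce the statement to the classification of smooth Fano $3$-folds and then to pin down $Y$ by analysing the contracted face $\overline{\NE}(X/Y)$ together with the ampleness of $-K_X$.

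First I would record the a priori constraints on $Y$. Since $f$ has fibre type with $1$-dimensional fibres and $\dim X=4$, the base $Y$ is a smooth $3$-fold by \cite{AND85}, and it is Fano by the corollary of \cite[Proposition 4.3]{WIS} already invoked above; moreover $\rho_Y=\rho_X-3$, and by \cite[Lemma 3.10]{IO} we already know $\delta_X\ge r-1=2$. As every smooth Fano $3$-fold satisfies $\rho_Y\le 10$ by the Mori--Mukai classification, this at once yields $\rho_X\le 13$. The lower bound $\rho_X\ge 5$, i.e.\ $\rho_Y\ge 2$, will come out of the case analysis below, where I show that no smooth Fano $3$-fold of Picard number $1$ can be the base of such a conic bundle.

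The central step is to determine which $Y$ occur. Here I would use that $f$ is non-elementary: the contracted face $\overline{\NE}(X/Y)$ is $3$-dimensional, and by the results of \cite{IO} its extremal rays are given by elementary contractions of $X$ over $Y$ with at most $1$-dimensional fibres, whose loci lie over the discriminant $\Delta\subset Y$. The condition $\rho_X-\rho_Y=3$ then forces the degenerate fibres of $f$ to contribute two numerically independent classes to the relative Picard group $\operatorname{Pic}(X/Y)$ (of rank $3$), hence at least two independent families of reducible fibres. Feeding this into the conic-bundle adjunction relation between $-K_X$, $f^*(-K_Y)$ and $\Delta$, and imposing that $-K_X$ be ample, gives strong positivity constraints on $Y$ and on these discriminant divisors. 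I expect these constraints to force $Y$ to carry an elementary $\mathbb{P}^1$-fibration over a del Pezzo surface, equivalently to be one of the toric $\mathbb{P}^1$-bundles or products appearing in the statement; comparing with the Mori--Mukai lists for $\rho_Y=2$ and $\rho_Y=3$ should then leave exactly the enumerated targets and, in particular, rule out $\rho_Y=1$.

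For $\rho_Y\ge 4$ (that is, $\rho_X\ge 7$) I would establish the product structure of part (c). The analysis of the previous step should exhibit a del Pezzo surface $S_1$ with $\rho_{S_1}=4$ carrying a conic bundle $S_1\to\mathbb{P}^1$ of relative Picard number $3$ that accounts for the whole face $\overline{\NE}(X/Y)$, so that the remaining directions of $Y$ split off; one then identifies $Y\cong\mathbb{P}^1\times S$ and $X\cong S_1\times S$ with $f$ induced by $S_1\to\mathbb{P}^1$, the factor $S$ being forced to be del Pezzo by the Fano condition on $X$. The main obstacle is the second step: showing that, among \emph{all} smooth Fano $3$-folds of Picard number $2$ and $3$, precisely the listed toric $\mathbb{P}^1$-bundles and products support a Fano conic bundle of relative Picard number $3$. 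This requires controlling the discriminant locus and the positivity of $-K_X$ finely enough to eliminate the remaining deformation families, and it is where the detailed description of non-elementary Fano conic bundles in \cite{IO}, combined with conic-bundle adjunction, must do the real work.
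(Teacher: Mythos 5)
Your a priori reductions are sound, and your route to the upper bound is actually cleaner than the paper's: since every smooth Fano $3$-fold has $\rho\le 10$ by Mori--Mukai, $\rho_X=\rho_Y+3\le 13$ follows at once, whereas the paper only obtains this at the very end from the product structure in part (c). But the core of your proposal is a gap, not a proof. Everything hinges on your sentence ``I expect these constraints to force $Y$ to carry an elementary $\mathbb{P}^1$-fibration over a del Pezzo surface'': no mechanism is given, and conic-bundle adjunction plus ampleness of $-K_X$ is far too coarse to eliminate the remaining Mori--Mukai families (there are $36$ at $\rho_Y=2$ and $31$ at $\rho_Y=3$, of which only $3$ and $4$ respectively survive). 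The missing ingredient is precisely Theorem \ref{thm_tesi}(b) (i.e.\ \cite[Theorem 4.2]{IO}), available in the paper's preliminaries: for $\rho_X-\rho_Y=3$ the base $Y$ itself admits a \emph{smooth} $\mathbb{P}^1$-fibration $\xi\colon Y\to S$ onto a del Pezzo surface. This immediately gives $\rho_Y\ge 2$ (the lower bound you defer to a case analysis you never carry out), and, since $S$ is rational, $Y\cong\mathbb{P}_S(\mathcal{E})$ for a rank-$2$ bundle $\mathcal{E}$; the classification problem then becomes Szurek--Wi\'sniewski's short list of rank-$2$ Fano bundles on surfaces \cite{S_WIS}, not the full Mori--Mukai tables.

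Even granting that reduction, you still need a concrete selection principle to decide which Fano bundles actually occur, and your proposal offers none. The paper's principle is the nef/non-nef trichotomy on the two disjoint smooth components $A_1,A_2$ of the discriminant (Proposition \ref{factorization}(c)), exploited through Mori Dream Space theory: $Y$ is Fano \cite{WIS}, hence a MDS \cite{BIRKAR}, so a nef $A_i$ is semiample, and disjointness forces either a contraction $Y\to\mathbb{P}^1$ (both nef, giving the products), or a birational contraction sending $A_2$ to a point (one nef, giving $\mathbb{P}_{\mathbb{P}^2}(\mathcal{O}\oplus\mathcal{O}(1))$, $\mathbb{P}_{\mathbb{P}^2}(\mathcal{O}\oplus\mathcal{O}(2))$, $\mathbb{P}_{\mathbb{P}^1\times\mathbb{P}^1}(\mathcal{O}(-1,-1)\oplus\mathcal{O})$ via \cite[Lemma 3.9]{CD15} and \cite{S_WIS}), or two disjoint divisorial contractions (neither nef), where the candidates No.\ 17, 24, 30 of \cite[Table 3]{MM3} must be excluded by hand (this is what Lemma \ref{lemma1} is for). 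Finally, for part (c) your claim that ``the remaining directions of $Y$ split off'' likewise has no tool behind it: proving a product decomposition of a Fano $4$-fold requires a genuine theorem, and the paper gets it by applying Casagrande's result on quasi-elementary contractions \cite[Theorem 1.1(i)]{CAS3} to the composite contraction $\xi\circ f\colon X\to S$ once $\rho_S\ge 3$. Without these three inputs your outline cannot be completed as written.
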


From the previous results, we are able to deduce the following:

\begin{coro} \label{cor_1} Let $X$ be a Fano $4$-fold with $\delta_{X}=3$ or such that there exists a conic bundle $X\to Y$ with $\rho_{X}-\rho_{Y}=3$. Then $X$ is rational. 
\end{coro}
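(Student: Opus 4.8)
The plan is to reduce the statement to a conic bundle over a rational base and then to produce a rational section, so that $X$ becomes birational to $\mathbb{P}^{1}\times Y$ and is therefore rational.

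First I would dispose of the product case. If $X\cong S_{1}\times S_{2}$ with each $S_{i}$ a del Pezzo surface, then both factors are rational, so $X$ is birational to $\mathbb{P}^{2}\times\mathbb{P}^{2}$ and hence rational; this already settles every instance of the hypothesis in which $X$ is such a product. If instead $X\ncong S_{1}\times S_{2}$, then by Theorem \ref{main_thm} the two conditions in the statement are equivalent, so in either case $X$ carries a conic bundle $f\colon X\to Y$ with $\rho_{X}-\rho_{Y}=3$. Now I would invoke Proposition \ref{prop1}: in case (c) the variety $X\cong S_{1}\times S$ is again a product of del Pezzo surfaces, hence rational; in cases (a) and (b) the target $Y$ is one of the explicitly listed Fano $3$-folds, each of which is visibly rational (products of projective spaces, a product with $\mathbb{F}_{1}$, and projectivizations of split rank-$2$ bundles over $\mathbb{P}^{2}$ or $\mathbb{P}^{1}\times\mathbb{P}^{1}$). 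Thus we may assume $f\colon X\to Y$ is a conic bundle with $Y$ rational.

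It then suffices to show that $f$ admits a rational section. Indeed, the generic fibre $X_{\eta}$ is a smooth conic over $K:=K(Y)$, and a rational section is exactly a $K$-rational point of $X_{\eta}$; once such a point exists we get $X_{\eta}\cong\mathbb{P}^{1}_{K}$, so $X$ is birational to $\mathbb{P}^{1}\times Y$ and we are done. The obstruction to a $K$-point is the class of $X_{\eta}$ in $\mathrm{Br}(K)[2]$. By purity this class is trivial as soon as it is unramified in codimension one, that is, as soon as $f$ splits over every irreducible component of its discriminant; and since $Y$ is smooth, projective and rational, $\mathrm{Br}(Y)=0$, so an unramified class automatically vanishes. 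Equivalently, using that $\rho_{X}-\rho_{Y}=3>1$ forces reducible fibres, I would perform elementary transformations along the split discriminant components to replace $X$ by a birational conic bundle $X'\to Y$ of strictly smaller discriminant, iterating until the discriminant is empty; the resulting smooth conic bundle is a Severi--Brauer $\mathbb{P}^{1}$-bundle with class in $\mathrm{Br}(Y)=0$, hence has a section.

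The heart of the argument, and the step I expect to be the main obstacle, is to show that the discriminant of $f$ is entirely \emph{split}, i.e.\ that $f$ has no non-split (ramified) discriminant component. The relation $\rho_{X}-\rho_{Y}=3$ only guarantees two split components and says nothing a priori about non-split ones, since the latter do not contribute to the relative Picard number. To rule them out I would use the fine structure of non-elementary Fano conic bundles from \cite{IO} together with the Fano condition and the smoothness of $X$, controlling the discriminant divisor against the short list of targets $Y$ supplied by Proposition \ref{prop1}. Once the discriminant is shown to be split, the Brauer-theoretic argument above yields the section and hence the rationality of $X$.
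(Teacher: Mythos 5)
Your reduction steps (disposing of the product case, invoking Theorem \ref{main_thm} to get the conic bundle, and Proposition \ref{prop1} to see that every possible target $Y$ is rational) coincide with the paper's, and your Brauer-theoretic endgame is sound in principle; but as written the proof is incomplete at exactly the point you flag yourself: you never actually prove that the discriminant of $f$ is entirely split, you only announce that you ``would'' control it using the structure theory of \cite{IO}. That missing step is a genuine gap in the write-up --- a conic bundle can in general have non-split discriminant components, and they are invisible to $\rho_X-\rho_Y$. However, it is not an open-ended obstacle: it is closed immediately by two results you already quoted. Proposition \ref{factorization}$(c)$ gives the \emph{exhaustive} decomposition $\bigtriangleup_{f}=A_{1}\sqcup A_{2}\sqcup \bigtriangleup_{g}$, where $g\colon X_{2}\to Y$ is the elementary conic bundle in the factorization of $f$, and each $A_{i}$ is split because $f^{*}(A_{i})=E_{i}+\hat{E}_{i}$ with both summands $\mathbb{P}^{1}$-bundles over $A_{i}$ (Proposition \ref{factorization}$(b)$); and Theorem \ref{thm_tesi}$(b)$ says that when $\rho_{X}-\rho_{Y}=3$ the elementary conic bundle $g$ is \emph{smooth}, i.e. $\bigtriangleup_{g}=\emptyset$. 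Hence $\bigtriangleup_{f}=A_{1}\sqcup A_{2}$ is split, and your purity argument together with $\mathrm{Br}(Y)=0$ for the smooth projective rational $Y$ of Proposition \ref{prop1} then produces the rational section. In fact, once you know $g$ is smooth you can bypass the discriminant of $f$ altogether: run the Brauer/Tsen argument directly on $g$ to get a rational section of $g$, so $X_{2}$ is birational to $\mathbb{P}^{1}\times Y$, and $X$ is obtained from $X_{2}$ by blow-ups.

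For comparison, the paper's proof avoids Brauer theory entirely: after the same reduction it passes to the elementary conic bundle $g\colon X_{2}\to Y$ of the factorization in Proposition \ref{factorization}$(a)$, notes that $g$ is smooth by Theorem \ref{thm_tesi}$(b)$, and applies \cite[Proposition 4.3]{FUJ16} --- a smooth $\mathbb{P}^{1}$-fibration over a smooth rational base is the projectivization of a rank $2$ vector bundle --- to conclude $X_{2}\cong \mathbb{P}_{Y}(\mathcal{F})$, hence $X_{2}$ and therefore $X$ is rational. This is essentially your argument with the Brauer-group input packaged into Fujita's result, and with your proposed ``elementary transformations along the split components'' replaced by the blow-down factorization, which already exists by Proposition \ref{factorization}$(a)$. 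So your route is viable and not substantially longer, but the splitness step must be filled in as above rather than left as a plan.
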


Using the definition of $\delta_{X}$, and Theorem \ref{main_thm} we give a characterization of Fano $4$-folds admitting a conic bundle with relative Picard number $3$, in terms of the Picard number of prime divisors on $X$.

\begin{theo} \label{thm5} Let $X$ be a Fano 4-fold, such that $X\ncong S_{1}\times S_{2}$, where each $S_{i}$ is a del Pezzo surface. Then $X$ admits a conic bundle structure $f\colon X\to Y$ with $\rho_{X}-\rho_{Y}=3$ if and only if there exists a prime divisor $D$ on $X$ such that $\rho_{D}=\rho_{X}-3$. If one the above equivalent conditions holds, then $X$ is rational.  
\end{theo}

On one hand, Theorem \ref{main_thm} gives a characterization of Fano $4$-folds with $\delta_{X}=3$, and allows to deduce the rationality result stated in Corollary \ref{cor_1}. On the other hand, we use Theorem \ref{main_thm}, Proposition \ref{prop1} and some properties of Fano conic bundle to get results in the perspective of classification of Fano 4-folds with $\delta_{X}=3$. The following proposition is a first step in this direction, in the case in which $X$ is a Fano $4$-fold with $\rho_{X}=5$ admitting a conic bundle structure of relative Picard dimension $3$. 

\begin{theo} \label{thm2} Let $f\colon X\to Y$ be a Fano conic bundle where $\dim{(X)}=4$, $\rho_{X}=5$ and $\rho_{X}-\rho_{Y}=3$. Then either $X$ is obtained as the blow-up along two smooth disjoint surfaces isomorphic to $\mathbb{P}^2$ contained in one of the following 4-folds:
\begin{itemize}
\item [(1)] $\mathbb{P}^{1}\times Z$, where $Z$ is a Fano $\mathbb{P}^{1}$-bundle over $\mathbb{P}^{2}$;
\item [(2)] $\mathbb{P}_{Y}(\mathcal{E})$, where $\mathcal{E}$ is a rank $2$ vector bundle on $Y$, where either $Y\cong \mathbb{P}_{\mathbb{P}^{2}}(\mathcal{O}\oplus \mathcal{O}(1))$ or $Y\cong \mathbb{P}_{\mathbb{P}^{2}}(\mathcal{O}\oplus \mathcal{O}(2))$;
\end{itemize}
or $X$ is obtained as the blow-up of three disjoint sections of the $\mathbb{P}^{2}$-bundle $\mathbb{P}_{\mathbb{P}^{2}}(\mathcal{O}\oplus \mathcal{O}(a)\oplus \mathcal{O}(b))$, where $(a,b)\in \{(0,0), (0,1), (0,2), (1,1),(1,2)\}$. 
\end{theo}

\subsection*{Methods and outline of the article} Let us describe in more detail the content of this paper. In Section 2 we set up notation and terminology. In Subsection \ref{defect} and \ref{conic_section} we recall some crucial facts about the Lefschetz defect and Fano conic bundles. 

Section 3 contains the central part of the paper: we prove Theorem \ref{main_thm}, Proposition \ref{prop1}, and we deduce Corollary \ref{cor_1}. We conclude this section by proving Theorem \ref{thm5}. 

Let us summarize the strategy to show Theorem \ref{main_thm}. If $\delta_{X}=3$, the thesis follows by the proof of \cite[Theorem 1.1 (ii)]{CAS1}, as we observe in Proposition \ref{first_implication}. 

Hence the main point to reach the statement consists in the study of non-elementary Fano conic bundles $f\colon X\to Y$ with $\rho_{X}-\rho_{Y}=3$. To this end, we use Proposition \ref{prop1}. The key to prove Proposition \ref{prop1} is to look at the \textit{discriminant divisor} $\bigtriangleup_{f}$ of $f$, which is a divisor of $Y$ that was introduced in \cite{B} and studied further in \cite[\S 1]{SARK}. See also \cite[$\S$3.2]{TESI} for a complete exposition about the main geometric properties related to $\bigtriangleup_{f}$. The discriminant divisor of $f$ is defined as follows:
\begin{center}
	$\bigtriangleup_{f}=\{y\in Y\mid f^{-1}(y) \text{ is singular}\}$.
\end{center}  

More precisely, we look at the components of $\bigtriangleup_{f}$ that in our case are two smooth disjoint divisors of $Y$, as we recall in Subsection \ref{conic_section}. Let us denote them by $A_{i}$ for $i=1,2$. Since $Y$ is Fano, by \cite[Corollary 1.3.2]{BIRKAR} we know that it is a Mori Dream Space (see \cite{HU} for more details about Mori Dream Spaces). 

Applying some techniques of Mori Dream Spaces to these divisors $A_{i}$, we deduce what kind of contractions $Y$ could have. 

Then by considering some results on conic bundles we are able to deduce that $Y$ is a Fano bundle of rank $2$ over $S$, that means that $Y\cong \mathbb{P}_{S}(\mathcal{F})$, where $\mathcal{F}$ is a rank $2$ vector bundle over $S$. At this point, using the main Theorem of \cite{S_WIS} which gives the list of Fano bundles of rank $2$ on surfaces we get the proof of Proposition \ref{prop1}. Moreover in many cases we are able to deduce an explicit description of the discriminant divisor of $f$ (see Corollary \ref{corollary_prop1}), and to compute $\rho_{A_i}$. Then this information, together with the conic bundle structure of $X$ allow to deduce that $\delta_{X}=3$, and hence Theorem \ref{main_thm}. 

Section 4 is entirely devoted to the case in which $X$ is a Fano $4$-fold with $\rho_{X}=5$ admitting a conic bundle structure of relative Picard dimension $3$. Here we prove some results (see Proposition \ref{lemma2} and Lemma \ref{lemma3}) which we need to show Theorem \ref{thm2}.

The aim of Section 5 is to provide examples of Fano conic bundles $f\colon X\to Y$ where $X$ is a toric Fano $4$-fold with $\rho_{X} \in \{5,6\}$, and $\rho_{X}-\rho_{Y}=3$. In particular we deduce that every $Y$ in Proposition \ref{prop1} appears as target of some conic bundle of relative Picard dimension $3$. 

\section{Preliminaries}

\subsection{Notations and Conventions}

We work over the field of complex numbers. Let $X$ be a manifold, \emph{i.e.} a smooth projective variety, of arbitrary dimension $n$. We call $X$ a \textit{Fano variety} if $-K_{X}$ is an ample divisor.

$\mathcal{N}_{1}(X)$ (respectively, $\mathcal{N}^{1}(X)$) is the $\mathbb{R}$-vector space of one-cycles (respectively, divisors) with real coefficients, modulo numerical equivalence.

$\dim \mathcal{N}_{1}(X)=\dim \mathcal{N}^{1}(X)=\rho_{X}$ is the Picard number of $X$.

Let $C$ be a one-cycle of X, and $D$ a divisor of X. We denote by $[C]$ (respectively, $[D]$) the numerical equivalence class in $\mathcal{N}_{1}(X)$ (respectively, $\mathcal{N}^{1}(X)$). Moreover we denote by $\mathbb{R}[C]$ the linear span of $[C]$ in $\mathcal{N}_{1}(X)$, and by $\mathbb{R}_{\geq0}[C]$ the corresponding ray. The symbol $\equiv$ stands for numerical equivalence (for both one-cycles and divisors).

$\operatorname{NE}(X)\subset \mathcal{N}_{1}(X)$ is the convex cone generated by classes of effective curves. 

We denote by $\operatorname{Eff} (X)$ the \textit{effective cone} of $X$, that is the convex cone inside $\mathcal{N}^{1}(X)$ spanned by classes of effective divisors.

Denote by $\operatorname{Bs}(|\cdot|)$ the base locus of a linear system. Let $D\subset X$ be a divisor. The divisor $D$ is movable if there exists $m > 0$, $m\in \mathbb{Z}$ such that $\operatorname{codim} \operatorname{Bs} |mD| \geq 2$.
We denote by $\operatorname{Mov}(X)$ the \textit{movable cone} of $X$, that is the closure of the convex cone of $\mathcal{N}^{1}(X)$ spanned by classes of movable divisors. We denote by $\operatorname{Nef}(X)$ the \textit{nef cone} of $X$, that is the convex cone of $\mathcal{N}^{1}(X)$ spanned by classes of nef divisors.

A \textit{contraction} of $X$ is a surjective morphism $\varphi\colon X\to Y$ with connected fibers, where $Y$ is normal and projective.
We denote by $\text{Exc}(\varphi)$ the exceptional locus of $\varphi$, i.e. the locus where $\varphi$ is not an isomorphism.
We say that $\varphi$ is of type $(a, b)$ if $\dim\text{Exc}(\varphi)=a$ and $\dim \varphi(\text{Exc}(\varphi))=b$.

We denote by $\text{NE}(\varphi)$ the relative cone of $f$, namely the convex subcone of $\text{NE}(X)$ generated by classes of curves contracted by $\varphi$.

A contraction of $X$ is called \textit{$K_{X}$-negative} (or simply \textit{$K$-negative}) if $-K_{X}\cdot C>0$ for every curve $C$ contracted by $\varphi$.

A $\mathbb{P}^{1}$-bundle over a projective variety $Z$ is the projectivization of a rank $2$ vector bundle on $Z$. 
We say that a vector bundle $\mathcal{E}$ of rank $r$ on a projective variety $Z$ is a \textit{Fano bundle} of rank $r$ if $\mathbb{P}_{Z}(\mathcal{E})$ is a Fano manifold. By abuse of notation, we will often say in this case that the variety $\mathbb{P}_{Z}(\mathcal{E})$ is a Fano bundle as well. 

\subsection{Lefschetz defect}\label{defect}

Let $X$ be a smooth Fano variety and take $D\subset X$ a prime divisor. The inclusion $i\colon D\hookrightarrow X$ induces a pushforward of one-cycles $i_{*}\colon \mathcal{N}_{1}(D)\to \mathcal{N}_{1}(X)$, that does not need to be injective nor surjective. 

We set $\mathcal{N}_{1}(D,X):=i_{*}(\mathcal{N}_{1}(D))\subseteq \mathcal{N}_{1}(X)$. Equivalently, $\mathcal{N}_{1}(D,X)$ is the linear subspace of $\mathcal{N}_{1}(X)$ spanned by classes of curves contained in $D$. 

Working with $\mathcal{N}_{1}(D,X)$ instead  
$\mathcal{N}_{1}(D)$ means that we consider curves in $D$ modulo numerical equivalence in $X$, instead of numerical equivalence in $D$. Note that $\dim{\mathcal{N}_{1}(D,X)}\leq \rho_{D}$. 

In \cite{CAS1} Casagrande introduced the following invariant of $X$, called \textit{Lefschetz defect}:
\begin{center}
	$\delta_{X}:=\max{\{\operatorname{codim}{\mathcal{N}_{1}(D,X)\mid D \text{ is a prime divisor of X}}\}}$
\end{center}

Note that if $n=\dim(X)\geq 3$ and $D$ is ample, then $i_{*}\colon \mathcal{N}_{1}(D)\to \mathcal{N}_{1}(X)$ is surjective by the Lefschetz Theorem on hyperplane sections. Hence $\delta_{X}$ measures the failure of this Theorem for non ample divisors.

The following theorem is related to this invariant and will be crucial for our purposes.

\begin{theo}[\cite{CAS1}, Theorem 1.1] \label{prodotto}
	For any Fano manifold $X$ we have $\delta_{X}\leq 8$. Moreover: 
    \begin{itemize}
    \item [(a)] If $\delta_{X}\geq 4$ then $X\cong S\times T$, where $S$ is a del Pezzo surface, $\rho_{S}=\delta_{X}+1$, and $\delta_{T}\leq \delta_{X}$;
    \item [(b)] If $\delta_{X}= 3$ then there exists a flat fiber type contraction $\psi\colon X\to Z$ where $Z$ is an $(n-2)$-dimensional Fano manifold, $\rho_{X}-\rho_{Z}=4$, and $\delta_{Z}\leq 3$.
    \end{itemize}
\end{theo}

\subsection{Fano conic bundles} \label{conic_section}
In this subsection we recall some definitions and the main properties on Fano conic bundles which we need throughout the paper. We refer the reader to \cite{IO, IO2, TESI} for a more complete exposition about Fano conic bundles. For the convenience of the reader we recall some results from \cite{IO} without proofs, thus making our exposition self-contained. 

\begin{defi} \label{def_cb}
	Let $X$ be a smooth, projective variety and let $Y$ be a normal, projective variety. A \textit{conic bundle} $f\colon X\to Y$ is a fiber type $K$-negative contraction where every fiber is one-dimensional. A \textit{Fano conic bundle} is a conic bundle $f\colon X\to Y$ where $X$ is a Fano variety.
\end{defi}

\begin{rema}
By \cite[Theorem 3.1]{AND85} it follows that for a conic bundle $f\colon X\to Y$ as in Definition \ref{def_cb} the variety $Y$ is actually smooth. See also \cite[Theorem 3.1.3]{TESI} for a detailed proof concerning the smoothness of $Y$. 
\end{rema}

\begin{defi} 
Let $f\colon X\to Y$ be a conic bundle. If $\rho_{X}-\rho_{Y}> 1$, $f$ is called non-elementary. Otherwise $f$ is called elementary.
\end{defi} 

\begin{prop} [\cite{IO}, Proposition 3.5]\label{factorization} 
Let $f\colon X\to Y$ be a Fano conic bundle with $\rho_{X}-\rho_{Y}:=r$. Then:
\begin{itemize}
\item [(a)] There exists a factorization: $X\stackrel{f_{1}}{\to} X_{1} \to \dots \to X_{r-2}\stackrel{f_{r-1}}{\to} X_{r-1}\stackrel{g}{\to} Y$, where each $f_{i}$ is a blow-up of the smooth variety $X_{i+1}$ along a smooth subvariety of codimension $2$, and $g$ is an elementary conic bundle;
\item [(b)] There exist smooth prime divisors $A_{1}, \dots A_{r-1}$ of $Y$ such that $f^{*}(A_{i})=E_{i}+ \hat{E}_{i}$, where $E_{i}$, $\hat{E}_{i}$ are smooth prime divisors on $X$ such that $E_{i}\to A_{i}$ and $\hat{E}_{i}\to A_{i}$ are $\mathbb{P}^{1}$-bundles for every $i=1,\dots,r$;
\item [(c)] Set $\bigtriangleup_{f}:=\{y\in Y| f^{-1}(y) \ \text{is singular}\}$ the discriminant divisor of $f$.
Then $\bigtriangleup_{f}=A_{1}\sqcup A_{2}\sqcup \dots \sqcup A_{r-1} \sqcup \bigtriangleup_{g}$, where $A_{i}$ are smooth components of $\bigtriangleup_{f}$ for $i=1,\dots,r-1$, and $\bigtriangleup_{g}$ is the discriminant divisor of $g$. 
\end{itemize}
\end{prop}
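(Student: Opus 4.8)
The plan is to argue by induction on $r=\rho_{X}-\rho_{Y}$, peeling off one blow-up at a time and reading off (b) and (c) from the geometry produced along the way. If $r=1$ there is nothing to prove: $f$ itself is the elementary conic bundle $g$, and (b), (c) are vacuous. So assume $r\geq 2$. Since $X$ is Fano and $f$ is a $K$-negative contraction, the relative cone $\NE(f)=\NE(X/Y)$ is an $r$-dimensional $K_{X}$-negative extremal face of the polyhedral cone $\NE(X)$, so I may analyse its extremal rays one at a time.

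Let $R$ be an extremal ray of $\NE(f)$. Every curve whose class lies in $R$ is contracted by $f$, hence is contained in a fibre of $f$; as those fibres are one-dimensional, all fibres of the associated elementary contraction $\operatorname{cont}_{R}\colon X\to X_{1}$ have dimension at most one. Were $R$ generated by the class of a general fibre of $f$, then $\operatorname{cont}_{R}$ would coincide with $f$ and force $r=1$; hence for $r\geq 2$ the ray $R$ is generated by a component of a reducible fibre, and $\operatorname{cont}_{R}$ is not of fibre type. By Ando's classification of $K$-negative extremal contractions from a smooth variety all of whose fibres have dimension at most one \cite{AND85}, $\operatorname{cont}_{R}=f_{1}$ is then the blow-up of a smooth variety $X_{1}$ along a smooth codimension-two subvariety, with exceptional divisor $E_{1}$.

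Next I would check that the induced morphism $g_{1}\colon X_{1}\to Y$ is again a conic bundle, so that the inductive hypothesis applies; note that $X_{1}$ need not be Fano, so the induction must be run over all conic bundles with smooth total space. The map $g_{1}$ is a fibre-type contraction with one-dimensional fibres and $\rho_{X_{1}}-\rho_{Y}=r-1$, so only $K$-negativity needs verification. Writing $-K_{X}=f_{1}^{*}(-K_{X_{1}})-E_{1}$ and testing against the strict transform $\ell'$ of a curve in a fibre of $g_{1}$ that is not contracted by $f_{1}$, the projection formula gives $-K_{X_{1}}\cdot (f_{1})_{*}\ell'=(-K_{X}+E_{1})\cdot\ell'=(-K_{X}\cdot\ell')+(E_{1}\cdot\ell')>0$, since $X$ is Fano and $\ell'\not\subset E_{1}$. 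Hence $g_{1}$ is a conic bundle, and by induction it factors as in (a); prepending $f_{1}$ yields the desired factorization of $f$.

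Finally I would extract (b) and (c). Each blow-up $f_{i}$ contracts a divisor onto a smooth codimension-two centre; pushing these centres down to $Y$ via $g$ produces smooth divisors $A_{1},\dots,A_{r-1}\subset Y$ over which the fibre of $f$ degenerates to a line pair $\ell_{i}+\ell_{i}'$. Tracking the two rulings yields smooth prime divisors $E_{i},\hat{E}_{i}$ with $E_{i}\to A_{i}$ and $\hat{E}_{i}\to A_{i}$ being $\mathbb{P}^{1}$-bundles, and since the degenerate fibre over a general point of $A_{i}$ is a reduced line pair one obtains the scheme-theoretic identity $f^{*}(A_{i})=E_{i}+\hat{E}_{i}$, which is (b). For (c) I would observe that the $A_{i}$ together with the discriminant $\bigtriangleup_{g}$ of the elementary part $g$ exhaust $\bigtriangleup_{f}$, and that they are pairwise disjoint: over a point lying on two of them the fibre would have to realise two distinct numerical splittings of the conic simultaneously, which is impossible. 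The main obstacle is precisely this local analysis of the degenerate fibres — establishing the clean splitting $f^{*}(A_{i})=E_{i}+\hat{E}_{i}$ with $A_{i}$ smooth, and the disjointness of the components of $\bigtriangleup_{f}$ — together with verifying, at each stage of the induction, the hypotheses of Ando's theorem (fibre dimension exactly one, so that the birational contraction is a genuine smooth blow-up and not a small one).
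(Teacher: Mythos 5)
A preliminary remark: the paper you were given never proves this proposition --- it is recalled verbatim from \cite[Proposition 3.5]{IO}, explicitly ``without proofs'', so your attempt can only be compared with the proof in that reference. Your skeleton is indeed the standard route taken there: induct on $r$, contract an extremal ray of the face $\NE(f)\subset\NE(X)$, invoke Ando's theorem \cite{AND85} to identify the contraction as a smooth blow-up with smooth codimension-two centre, and check that the induced map $g_1\colon X_1\to Y$ is again a conic bundle so the induction can be run over conic bundles with smooth (not necessarily Fano) total space. So the approach is the right one; the problems are in the execution.

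There are three genuine gaps. (i) Your exclusion of fibre-type rays is wrong as stated: if an extremal ray $R$ of $\NE(f)$ contained the general fibre class $[F]$, then $\operatorname{cont}_R$ would \emph{not} coincide with $f$ --- it would be an elementary conic bundle through which $f$ factors (it drops $\rho$ by exactly one), so no contradiction with $r\geq 2$ arises from your argument. What you actually need is that for $r\geq 2$ the class $[F]$ spans no extremal ray of $\NE(f)$ at all: there must exist a reducible fibre $\ell+\ell'$ with $[\ell],[\ell']$ non-proportional (otherwise $\NE(f)$ would be one-dimensional), whence $[F]=[\ell]+[\ell']$ is a sum of non-proportional classes in $\NE(f)$; note also that a component of a \emph{double} line has class $\tfrac12[F]$ and its ray is contracted to a fibre-type map, so ``component of a reducible fibre'' alone does not guarantee birationality. (ii) Your $K$-negativity computation for $g_1$ silently assumes $\ell'\not\subset E_1$, i.e.\ it only treats curves $C\subset X_1$ with $C\not\subset A_1':=f_1(E_1)$; for $C\subset A_1'$ every curve of $X$ dominating $C$ lies inside $E_1$, where $E_1\cdot\ell'$ can be negative (it equals $-1$ on fibres of $E_1\to A_1'$), and the displayed inequality fails. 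This case must be excluded separately: no curve of the centre $A_1'$ is contracted by $g_1$, since otherwise the surface $f_1^{-1}(C)$ would lie in a single fibre of $f$, contradicting one-dimensionality of the fibres of $f$. The same observation is what makes the centres map finitely onto divisors of $Y$ in part (b). (iii) Parts (b) and (c) are asserted rather than proved, as you yourself concede: smoothness of the $A_i$, the scheme-theoretic identity $f^*(A_i)=E_i+\hat{E}_i$ (which requires that no double lines occur over $A_i$), and the disjointness all rest on the deferred local analysis. In particular, your disjointness argument (``two distinct numerical splittings of the conic is impossible'') needs the classes $[e_1],\dots,[e_{r-1}],[F]$ to be linearly independent in $\mathcal{N}_1(X)$ --- which does hold, because each blow-up in the factorization drops the Picard number by one --- but without saying this, nothing prevents two components $A_i$, $A_j$ from inducing the \emph{same} numerical splitting, and the claimed contradiction evaporates. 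In short: correct plan, but the steps that constitute the actual content of \cite[Proposition 3.5]{IO} are either misargued or left open.
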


The geometric situation is represented in Figure \ref{fig:fig1}.
\begin{figure}
	\centering
    \includegraphics[scale=0.8]{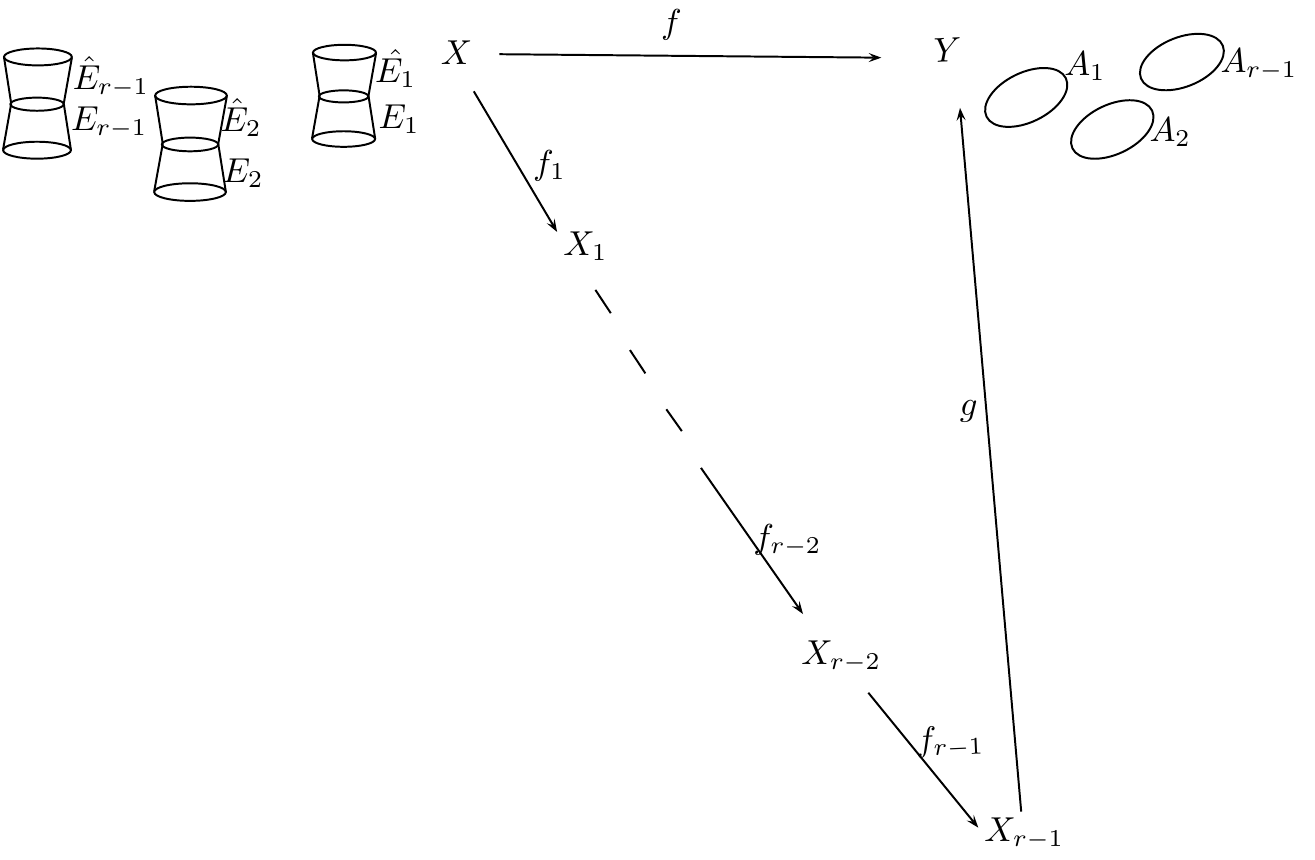}
    \caption{Divisors of Proposition \ref{factorization}.}
	\label{fig:fig1}
\end{figure}

Finally, we recall a weaker version of \cite[Theorem 4.2]{IO}, which we need in the next sections.  

\begin{theo} \label{thm_tesi} Let $f\colon X\to Y$ be a Fano conic bundle. Then $\rho_{X}-\rho_{Y}\leq 8$. Moreover:
\begin{itemize} 
\item [(a)] If $\rho_{X}-\rho_{Y}\geq 4$, then $X\cong S\times T$, where $S$ is a del Pezzo surface, $T$ is a $(n-2)$-dimensional Fano manifold, $Y\cong \mathbb{P}^{1}\times T$, and $f$ is induced by a conic bundle $S\to \mathbb{P}^{1}$;
\item [(b)] Assume that $\rho_{X}-\rho_{Y}=3$. Let us take a factorization for $f$ as in Proposition \ref{factorization} (a), and let us denote by $g\colon X_{2}\to Y$ the elementary conic bundle in this factorization. Then $g$ is smooth, $Y$ is also Fano and there exists a smooth $\mathbb{P}^{1}$-fibration\footnote{A smooth $\mathbb{P}^{1}$-fibration is a smooth morphism such that every fiber is isomorphic to $\mathbb{P}^{1}$.} $\xi\colon Y\to {Y}^{\prime}$, where ${Y}^{\prime}$ is a $(n-2)$-dimensional Fano manifold. 
\end{itemize}
\end{theo}

The following Proposition is a consequence of the proof of \cite[Theorem 1.1 (ii)]{CAS1}.  

\begin{prop} \label{first_implication} Let $X$ be a Fano manifold of dimension $n\geq 3$. If $\delta_{X}=3$, then it admits a conic bundle structure $f\colon X\to Y$ where $\rho_{X}-\rho_{Y}=3$.
\end{prop}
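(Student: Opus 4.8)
The plan is to unpack what Casagrande's proof of \cite[Theorem 1.1 (ii)]{CAS1} actually produces and extract the conic bundle from it. By Theorem \ref{prodotto} (b) applied to our $X$ with $\delta_X=3$, we already obtain a flat fiber type contraction $\psi\colon X\to Z$ onto an $(n-2)$-dimensional Fano manifold $Z$ with $\rho_X-\rho_Z=4$. This $\psi$ has two-dimensional fibers, so it is not yet the conic bundle we want; the task is to factor $\psi$ so as to isolate a one-dimensional fiber-type step. First I would recall that the contraction $\psi$ arises, in Casagrande's construction, from a prime divisor $D$ realizing the defect, i.e. with $\codim \mathcal{N}_1(D,X)=3$, together with the associated face of $\NE(X)$; the relevant geometry is that the fibers of $\psi$ are del Pezzo surfaces (indeed the general fiber is a del Pezzo surface of Picard number $\rho_X-\rho_Z=4$), and such a surface carries a conic bundle structure onto $\mathbb{P}^1$.

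\medskip

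The key step is then to produce, inside the relative cone $\NE(\psi)$, an intermediate face defining a contraction that is one-dimensional on fibers. Concretely, I would look at the factorization of $\psi$ through the Mori program relative to $Z$: since $\NE(\psi)$ is $4$-dimensional (as $\rho_X-\rho_Z=4$) and the fibers of $\psi$ are del Pezzo surfaces of Picard number $4$, one can run a relative MMP or simply pick an extremal ray $R\subset\NE(\psi)$ whose contraction $\varphi_R\colon X\to W$ is of fiber type with one-dimensional fibers. The point is that a del Pezzo surface $S$ with $\rho_S=4$ always admits a $\mathbb{P}^1$-fibration $S\to\mathbb{P}^1$, and after choosing the extremal ray of $S$ compatibly across the fibration $\psi$, this fiberwise conic bundle globalizes to a conic bundle $f\colon X\to Y$ with $Y\to Z$ the residual contraction. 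Tracking Picard numbers, one has $\rho_X-\rho_Y=3$ exactly because the remaining relative dimension after extracting the $\mathbb{P}^1$-fibration is $4-1=3$; more precisely $f$ contracts the $1$-cycle class of the ruling and its companion structure so that $\rho_X-\rho_Y=3$ while the composite $X\to Y\to Z$ recovers $\psi$.

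\medskip

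Thus the skeleton of the argument is: (i) invoke Theorem \ref{prodotto} (b) to get $\psi\colon X\to Z$ with $\rho_X-\rho_Z=4$ and del Pezzo fibers of Picard number $4$; (ii) exhibit a fiberwise conic bundle structure on these del Pezzo surfaces and argue it extends to a global fiber type contraction $f\colon X\to Y$ with one-dimensional fibers; (iii) verify that $f$ is $K$-negative and hence a conic bundle in the sense of Definition \ref{def_cb}, which is automatic since $\varphi_R$ contracts a $-K_X$-positive extremal ray; (iv) compute $\rho_X-\rho_Y=3$ from the Picard numbers of the tower $X\to Y\to Z$. The main obstacle I anticipate is step (ii): ensuring that the chosen fiberwise $\mathbb{P}^1$-fibration is genuinely the restriction of a single extremal contraction of $X$ rather than varying discontinuously from fiber to fiber. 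This is precisely where Casagrande's explicit analysis of the face of $\NE(X)$ associated to the divisor $D$ is needed, and I would quote that part of the proof of \cite[Theorem 1.1 (ii)]{CAS1} directly rather than reconstruct it, since the statement is advertised as a consequence of that proof.
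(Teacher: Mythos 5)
Your proposal follows essentially the same route as the paper: invoke Theorem~\ref{prodotto}~(b) to obtain the flat fiber type contraction $\psi\colon X\to Z$ with $\rho_{X}-\rho_{Z}=4$, and then defer to the proof of \cite[Theorem 1.1 (ii)]{CAS1} (specifically \cite[Proposition 3.3.1]{CAS1}) for the factorization $\psi=h\circ f$ with $f\colon X\to Y$ a conic bundle and $\rho_{X}-\rho_{Y}=3$, which is exactly what the paper does. One small caveat in your heuristic reconstruction: the conic bundle $f$ is the contraction of a three-dimensional face of $\NE(\psi)$, not of a single extremal ray (an extremal ray of fiber type with one-dimensional fibers would give relative Picard number $1$), but since you explicitly quote Casagrande's argument for that step, your proof is unaffected.
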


\begin{proof} By Theorem \ref{prodotto} $(b)$ there exists a flat fiber type contraction $\psi\colon X\to Z$ where $Z$ is a $(n-2)$-dimensional Fano manifold and $\rho_{X}-\rho_{Z}=4$. By the proof of \cite[Theorem 1.1 (ii)]{CAS1}\footnote{More precisely, see the proof of \cite[Proposition 3.3.1]{CAS1}, whose outline is given in \cite[\S 3.3.3]{CAS1}.} it follows that this fiber type contraction $\psi$ factors in the following way $\psi=h\circ f$ where $f\colon X\to Y$ and $h\colon Y\to Z$ are contractions. In particular, $f$ is a Fano conic bundle with $\rho_{X}-\rho_{Y}=3$, and our claim follows.
\end{proof}

\section{Main results} \label{main_section}

In this section we discuss the main results of this paper which give a characterization of some Fano 4-folds, in terms of the Lefschetz defect of these varieties. We first analyze the easier case in which the Fano variety $X$ is a product between two del Pezzo surfaces. The first proposition is an immediate consequence of the structure of conic bundles induced on del Pezzo surfaces, and of the very definition of $\delta_{X}$. We point out that in this case we know all possible non-elementary conic bundles of $X$. 

\begin{prop} \label{prop2} Let $X\cong S_{1}\times S_{2}$ where $S_{i}$ are smooth del Pezzo surfaces $i=1,2$, and $X\ncong \mathbb{P}^{2}\times \mathbb{P}^{2}$. Then $X$ has a conic bundle structure $f\colon X\to Y$ with $\delta_{X}\geq r:=\rho_{X}-\rho_{Y}$.
\end{prop}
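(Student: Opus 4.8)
The plan is to build the required conic bundle on $X \cong S_1 \times S_2$ by pulling back a conic bundle from one of the two factors, and then to read off the inequality $\delta_X \geq r$ directly from the product structure together with the definition of the Lefschetz defect. First I would recall that any del Pezzo surface $S$ with $\rho_S \geq 2$ admits an extremal contraction onto a curve, which (since $-K_S$ is ample and the generic fiber is a conic) is a conic bundle $\pi\colon S \to \mathbb{P}^1$ with $\rho_S - 1$ reducible fibers; this is the classical structure theory of conic bundles on surfaces. The only del Pezzo surface with $\rho_S = 1$ is $\mathbb{P}^2$, which has no such contraction, so the hypothesis $X \ncong \mathbb{P}^2 \times \mathbb{P}^2$ guarantees that at least one factor, say $S_1$, satisfies $\rho_{S_1} \geq 2$ and hence carries a conic bundle $\pi\colon S_1 \to \mathbb{P}^1$.

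Next I would produce the conic bundle on $X$ by setting $f := \pi \times \operatorname{id}_{S_2}\colon S_1 \times S_2 \to \mathbb{P}^1 \times S_2 =: Y$. Since $\pi$ is a conic bundle (a $K$-negative fiber type contraction with one-dimensional fibers) and we are taking its product with the identity, the morphism $f$ is again a fiber type contraction whose fibers coincide with those of $\pi$, hence are one-dimensional; and $-K_X = -K_{S_1} \boxtimes \mathcal{O} + \mathcal{O} \boxtimes (-K_{S_2})$ restricts positively to every fiber of $f$ because $-K_{S_1}$ does so on the fibers of $\pi$. Thus $f$ is a conic bundle on $X$ in the sense of Definition \ref{def_cb}. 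By the product formula for Picard numbers, $\rho_X = \rho_{S_1} + \rho_{S_2}$ and $\rho_Y = 1 + \rho_{S_2}$, so that
\begin{equation*}
r = \rho_X - \rho_Y = \rho_{S_1} - 1.
\end{equation*}

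Finally I would establish the inequality $\delta_X \geq r$ by exhibiting a prime divisor $D \subset X$ with large $\operatorname{codim} \mathcal{N}_1(D,X)$. The natural candidate is a fiber divisor of the projection onto the first factor: take a general point $s \in S_1$ and set $D := \{s\} \times S_2$, a prime divisor isomorphic to $S_2$. Curves contained in $D$ are curves in $S_2$, and under the product decomposition $\mathcal{N}_1(X) = \mathcal{N}_1(S_1) \oplus \mathcal{N}_1(S_2)$ their classes all lie in the second summand $\mathcal{N}_1(S_2)$; hence $\mathcal{N}_1(D,X) \subseteq \mathcal{N}_1(S_2)$, which has codimension $\rho_{S_1}$ in $\mathcal{N}_1(X)$. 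Therefore
\begin{equation*}
\delta_X \geq \operatorname{codim} \mathcal{N}_1(D,X) \geq \rho_{S_1} > \rho_{S_1} - 1 = r,
\end{equation*}
which gives the claim (in fact with the slightly stronger $\delta_X \geq r+1$). The main point requiring care is the verification that $\mathcal{N}_1(D,X)$ really equals $\mathcal{N}_1(S_2)$ rather than something larger: since $D$ is a fiber of a contraction with $\{s\}\times S_2 \cong S_2$ embedded as a product slice, the pushforward of one-cycles factors through the inclusion $\mathcal{N}_1(S_2) \hookrightarrow \mathcal{N}_1(X)$, so no curve in $D$ can acquire a component in the $\mathcal{N}_1(S_1)$ direction; this is the only step where the product structure must be used carefully, and it is the expected obstacle, albeit a mild one.
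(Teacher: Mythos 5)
Your construction of the conic bundle coincides with the paper's: take a conic bundle $\pi\colon S_1\to\mathbb{P}^1$ on a factor with $S_1\ncong\mathbb{P}^2$ and set $f=\pi\times\operatorname{id}_{S_2}$, so that $r=\rho_{S_1}-1$. The gap is in your last step: $D:=\{s\}\times S_2$ is \emph{not} a prime divisor of $X$. Since $\dim X=4$ and $\dim(\{s\}\times S_2)=2$, this subvariety has codimension $2$, and the Lefschetz defect is a maximum over prime divisors only, so the inequality $\delta_X\geq\operatorname{codim}\mathcal{N}_1(D,X)$ is simply not available for this $D$. Moreover, the conclusion you draw from it, $\delta_X\geq\rho_{S_1}>r$, is actually false in general: by \cite[Example 3.1]{CAS1} (which is exactly what the paper quotes to finish its proof) one has the equality $\delta_X=\max\{\rho_{S_1}-1,\rho_{S_2}-1\}$, so for instance when $\rho_{S_1}\geq\rho_{S_2}$ one gets $\delta_X=\rho_{S_1}-1$, strictly smaller than your claimed lower bound. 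The falsity of the ``stronger'' conclusion is a reliable symptom that the witnessing object was illegitimate.

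Your strategy of exhibiting an explicit divisor can be repaired, though, and then it gives exactly the stated inequality and nothing more: take $D:=C\times S_2$ with $C\subset S_1$ an irreducible curve (e.g.\ a fiber of $\pi$). This is a prime divisor, and every irreducible curve in $D$ maps under the first projection either to a point or onto $C$, so its class lies in $\mathbb{R}[C]\oplus\mathcal{N}_1(S_2)$ inside $\mathcal{N}_1(X)=\mathcal{N}_1(S_1)\oplus\mathcal{N}_1(S_2)$. Hence
\begin{equation*}
\operatorname{codim}\mathcal{N}_1(D,X)=\rho_{S_1}-1=r,
\end{equation*}
which yields $\delta_X\geq r$, as required. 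This self-contained computation is a legitimate alternative to the paper's route, which instead cites the exact value of $\delta_X$ for products from \cite[Example 3.1]{CAS1}; but as written your proof does not establish the proposition.
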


\begin{proof} Let $X\cong S_{1}\times S_{2}$ where $S_{i}$ are del Pezzo surfaces $i=1,2$. Take one between $S_{i}$ such that $S_{i}\ncong \mathbb{P}^{2}$. Assume for simplicity that it is $S_{1}$. Then we can consider the following contraction $f=(\widetilde{f}, \operatorname{id}_{S_{2}})\colon X\to \mathbb{P}^{1}\times S_{2}$, where $\widetilde{f}\colon S_{1}\to \mathbb{P}^{1}$ is a conic bundle with $r=\rho_{S_{1}}-1$, and $\operatorname{id}_{S_{2}}\colon S_{2}\to S_{2}$ is the identity. The statement follows because $\delta_{X}=\max\{\rho_{S_{1}}-1,\rho_{S_{2}}-1\}$ (see \cite[Example 3.1]{CAS1}).
\end{proof}

\begin{rema} \label{case_product} Let $X$ be as in the above Proposition. We know all possible conic bundles of $X$. Indeed, if $f\colon X\to Y$ is a Fano conic bundle then by \cite[Lemma 2.10]{IO} $Y$ is also a product of two smooth varieties. In particular, the same proof of \cite[Theorem 4.2 $(1)$]{IO} allows to deduce that $Y\cong \mathbb{P}^{1}\times S_{2}$, and $f$ is induced by a conic bundle $S_{1}\to \mathbb{P}^{1}$. 
\end{rema}

Our next goal will be to analyze the case in which $X$ is not isomorphic to a product of two del Pezzo surfaces, by proving Theorem \ref{main_thm}. To this end, we need two preliminary results given by the following Lemma and Proposition \ref{prop1}.  

\begin{lemm} \label{lemma1} Let $Y=\mathbb{P}_{S}(\mathcal{E})$ be a Fano bundle of rank $2$ on a smooth del Pezzo surface $S$. Assume that $Y$ has an elementary divisorial contraction $\psi\colon Y\to \mathbb{P}^{1}\times \mathbb{P}^{2}$ of type $(2,1)$, and that there exists a prime effective divisor $D$ of $Y$ such that $D\cap \operatorname{Exc}(\psi)=\emptyset$. Then $S\cong \mathbb{F}_{1}$ and $Y\cong \mathbb{P}^{1}\times \mathbb{F}_{1}$.
\end{lemm}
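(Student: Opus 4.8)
The plan is to reduce the possibilities for $S$ by a Picard-number count, to realize $\psi$ as a smooth blow-up, and then to pin down its center using the disjoint divisor $D$. Since $\pi\colon Y\to S$ is a $\mathbb{P}^{1}$-bundle we have $\rho_{Y}=\rho_{S}+1$, while the elementary contraction $\psi$ gives $\rho_{Y}=\rho_{\mathbb{P}^{1}\times\mathbb{P}^{2}}+1=3$; hence $\rho_{S}=2$, so $S$ is one of the two del Pezzo surfaces of Picard number two, namely $\mathbb{F}_{1}$ or $\mathbb{P}^{1}\times\mathbb{P}^{1}$. As $\dim Y=3$ and $\mathbb{P}^{1}\times\mathbb{P}^{2}$ is smooth, the classification of extremal contractions of smooth threefolds shows that an elementary divisorial contraction of type $(2,1)$ is the blow-up of its image along a smooth curve; thus $\psi$ is the blow-up of $\mathbb{P}^{1}\times\mathbb{P}^{2}$ along a smooth curve $C$, and $E:=\operatorname{Exc}(\psi)$ is a $\mathbb{P}^{1}$-bundle over $C$ whose fibres generate the ray $R_{\psi}$ contracted by $\psi$.

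The heart of the argument is to exploit $D\cap E=\emptyset$. Since $\psi$ is then an isomorphism near $D$, the image $\bar{D}:=\psi(D)$ is a prime divisor of $\mathbb{P}^{1}\times\mathbb{P}^{2}$ with $\bar{D}\cong D$ and $\bar{D}\cap C=\emptyset$, so in particular $\bar{D}\cdot C=0$. I would work with the standard dual bases of $\mathcal{N}^{1}(\mathbb{P}^{1}\times\mathbb{P}^{2})$ and $\mathcal{N}_{1}(\mathbb{P}^{1}\times\mathbb{P}^{2})$, writing $\bar{D}\equiv aH_{1}+bH_{2}$ and $[C]\equiv\alpha\ell_{1}+\beta\ell_{2}$ with $a,b,\alpha,\beta\geq0$, where $H_{i}$ are the pullbacks of the hyperplane classes and $\ell_{i}$ the classes of the two rulings. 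The vanishing $\bar{D}\cdot C=a\alpha+b\beta=0$ together with $[C]\neq0$ and $\bar{D}$ effective leaves exactly two cases. If $\bar{D}$ has bidegree $(a,0)$ then primeness forces $a=1$ and $\bar{D}\cong\{pt\}\times\mathbb{P}^{2}\cong\mathbb{P}^{2}$; but then $\pi|_{D}\colon\mathbb{P}^{2}\to S$ can be neither dominant (a generically finite map would force $\rho_{S}\leq\rho_{\mathbb{P}^{2}}=1$) nor have one-dimensional image (then $D=\pi^{-1}(\pi(D))$ would be a $\mathbb{P}^{1}$-bundle over a curve, whence $\rho_{D}=2\neq1$), a contradiction. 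Hence $\bar{D}$ has bidegree $(0,b)$ with $b\geq1$ and $\beta=0$, so $[C]=\alpha\ell_{1}$; since $C$ is irreducible this forces $C=\mathbb{P}^{1}\times\{x_{0}\}$ for some $x_{0}\in\mathbb{P}^{2}$.

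With the center equal to the product curve $\mathbb{P}^{1}\times\{x_{0}\}$, blowing up commutes with the product and yields $Y\cong\mathbb{P}^{1}\times\operatorname{Bl}_{x_{0}}\mathbb{P}^{2}\cong\mathbb{P}^{1}\times\mathbb{F}_{1}$. It then remains to identify the base of the given projection $\pi$: analysing $\pi(E)$, and using that $R_{\psi}$ is distinct from the fibre ray of $\pi$ together with the existence of the divisor $D$ disjoint from $E$, one checks that $E$ is the $\pi$-preimage of a curve of $S$; this singles out the ruling $Y\to\mathbb{F}_{1}$ among the two fibre-type contractions of $\mathbb{P}^{1}\times\mathbb{F}_{1}$ and gives $S\cong\mathbb{F}_{1}$.

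I expect the second paragraph to be the main obstacle: the delicate point is to control the location of the center $C$ from the mere existence of one prime divisor disjoint from $E$. The decisive move is to transport $D$ isomorphically to $\mathbb{P}^{1}\times\mathbb{P}^{2}$ and to rule out the bidegree $(a,0)$ case via the observation that a divisor isomorphic to $\mathbb{P}^{2}$ cannot sit inside a $\mathbb{P}^{1}$-bundle over a surface of strictly larger Picard number.
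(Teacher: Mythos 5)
Your core argument is correct and is essentially the paper's own: after reducing $\psi$ to the blow-up of a smooth curve $C\subset\mathbb{P}^{1}\times\mathbb{P}^{2}$, both proofs hinge on the dichotomy that a curve disjoint from a nonzero effective divisor in $\mathbb{P}^{1}\times\mathbb{P}^{2}$ is either of the form $\mathbb{P}^{1}\times\{p\}$ or $\{q\}\times C'$, and both exclude the second case by exhibiting a divisor of $Y$ isomorphic to $\mathbb{P}^{2}$ and observing that such a divisor can neither dominate $S$ (this would force $\rho_{S}\leq\rho_{\mathbb{P}^{2}}=1$, against $\rho_{S}=\rho_{Y}-1=2$) nor have one-dimensional image (it would then be a $\mathbb{P}^{1}$-bundle over a curve). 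The only cosmetic difference is which divisor is fed into this contradiction: you use $D$ itself, which your bidegree computation forces to be a fiber $\{t\}\times\mathbb{P}^{2}$, whereas the paper uses the transform $G$ of a parallel fiber $\{q'\}\times\mathbb{P}^{2}$ with $q'\neq q$. Both work, and your explicit bidegree bookkeeping is a clean proof of the dichotomy that the paper only asserts.

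The gap is in your last paragraph, and it cannot be repaired: the claim that ``$E$ is the $\pi$-preimage of a curve of $S$'', which you use to single out the ruling and conclude $S\cong\mathbb{F}_{1}$, is false, and in fact the conclusion $S\cong\mathbb{F}_{1}$ does not follow from the hypotheses. Take $S=\mathbb{P}^{1}\times\mathbb{P}^{1}$ and $\mathcal{E}=\mathcal{O}\oplus\mathcal{O}(0,1)$, so that $Y=\mathbb{P}_{S}(\mathcal{E})\cong\mathbb{P}^{1}\times\mathbb{F}_{1}$ with $\pi=\operatorname{id}\times\sigma$, where $\sigma\colon\mathbb{F}_{1}\to\mathbb{P}^{1}$ is the ruling; let $\psi=\operatorname{id}\times\beta$, where $\beta\colon\mathbb{F}_{1}\to\mathbb{P}^{2}$ is the blow-down, and let $D=\mathbb{P}^{1}\times\ell$ with $\ell$ the strict transform of a line avoiding the blown-up point. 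All hypotheses of the Lemma hold, yet $S\ncong\mathbb{F}_{1}$; moreover $E=\mathbb{P}^{1}\times e$ dominates $S$, since the $(-1)$-curve $e$ is a section of $\sigma$, so no analysis of $\pi(E)$ along the lines you sketch can succeed. To be fair, the paper's proof is equally silent on this point (after identifying $C\cong\mathbb{P}^{1}\times\{p\}$ it simply says ``our thesis follows''), and in the two places where the Lemma is applied (to exclude No.~17 and No.~24 of the Mori--Mukai list) only the conclusion $Y\cong\mathbb{P}^{1}\times\mathbb{F}_{1}$ is used --- and that part you do prove. So your attempt establishes exactly as much as the paper's proof does; the statement itself ought to be weakened to assert only $Y\cong\mathbb{P}^{1}\times\mathbb{F}_{1}$.
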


\begin{proof} Let us denote by $A:=\text{Exc}(\psi)$. We set $C:=\psi(A)$ and $B:=\psi(D)$ which by our assumption are respectively a curve and a divisor in $\mathbb{P}^{1}\times \mathbb{P}^{2}$. Note that $C\cap B=\emptyset$ because $A\cap D=\emptyset$. Since $C$ is a curve on $\mathbb{P}^{1}\times \mathbb{P}^{2}$ disjoint from an effective non-zero divisor, $C$ is either of the form $\mathbb{P}^1\times \{p\}$ or $\{q\}\times C' $, where $p\in \mathbb{P}^2$ and $q\in \mathbb{P}^1$ are points and $C'\subset \mathbb{P}^2$ is a curve. We prove that the second case is not possible. Assume by contradiction that it holds. Let us denote by $G$ the transform of $\{q'\}\times \mathbb{P}^2$ on $Y$, where $q'$ is a point in $\mathbb{P}^1$ different by $q$. Then $G\cap A=\emptyset$ and thus $G\cong \mathbb{P}^2$ does not dominate $S$ via the $\mathbb{P}^1$-bundle $Y\to S$ since $G$ cannot be a $\mathbb{P}^1$-bundle over a curve. But this is impossible because $\rho_S=\rho_Y-1=2$. We conclude that $C\cong \mathbb{P}^1\times \{p\}$ where $p\in \mathbb{P}^2$ is a point, and our thesis follows. 
\end{proof}

Now we prove Proposition \ref{prop1}. The strategy is to look at the two components of $\bigtriangleup_{f}$, which are smooth divisors of $Y$, and that we denote by $A_{i}$ as in Proposition \ref{factorization} $(c)$. Since by \cite[Corollary of Proposition 4.3]{WIS} $Y$ is Fano, using \cite[Corollary 1.3.2]{BIRKAR} we know that it is a Mori Dream Space (see \cite{HU} for details about Mori Dream Spaces). Applying some techniques of Mori Dream Spaces to these divisors $A_{i}$, we deduce what kind of contractions $Y$ could have. Moreover, using the smooth $\mathbb{P}^{1}$-fibration $\xi\colon Y\to S$ of Theorem \ref{thm_tesi} $(b)$ with $S$ a del Pezzo surface, we are able to deduce that $Y$ is a Fano bundle of rank $2$ over $S$. We get the statement, putting together all geometric information which arise from the Mori Dream Spaces tools, and using the main Theorem of \cite{S_WIS}, which gives the list of Fano bundles of rank $2$ on surfaces.

\begin{proof}[\textbf{\bf{Proof of Proposition \ref{prop1}}}] By Theorem \ref{thm_tesi} $(b)$, there exists a smooth $\mathbb{P}^{1}$-fibration $\xi\colon Y\to S$, with $S$ a del Pezzo surface. By our assumption $\rho_{X}-\rho_{S}=4$ so that $\rho_{X}\geq 5$. Since $\xi$ is smooth with fibers isomorphic to $\mathbb{P}^{1}$ and $S$ is rational, then there exists a rank $2$ vector bundle $\mathcal{E}$ on $S$ such that $Y\cong \mathbb{P}_{S}(\mathcal{E})$ (see for instance \cite[Proposition 4.3]{FUJ16}).

Assume that $\rho_{X}=5$. This means that in the smooth $\mathbb{P}^{1}$-fibration $\xi\colon Y\to S$, $S \cong \mathbb{P}^{2}$. As we have already observed, $Y\cong \mathbb{P}_{\mathbb{P}^{2}}(\mathcal{E})$. We find all possible $\mathcal{E}$. Let us denote by $A_{i}$ for $i=1,2$ the two smooth components of the discriminant divisor of $f$, as in Proposition \ref{factorization} $(c)$. Suppose that both divisors $A_{i}$ are nef. We prove that in this case $Y\cong \mathbb{P}^{1}\times \mathbb{P}^{2}$. 

Since by \cite[Corollary of Proposition 4.3]{WIS} $Y$ is Fano, applying \cite[Corollary 1.3.2]{BIRKAR} it is a MDS, hence $A_{i}$ are semiample divisors. Using that $A_{1}\cap A_{2}=\emptyset$, it is easy to check that they are linearly proportional divisors of $Y$ which give a contraction $\Psi\colon Y\to \mathbb{P}^{1}$ such that $\Psi(A_{i})$ is a point for $i=1,2$. We observe that $\xi(A_{i})=S$ for $i=1,2$. Indeed, by Step 2 of the proof of \cite[Theorem 4.2 (2)]{IO} there exists a prime divisor $G_1\subset X$ which is a $\mathbb{P}^{1}$-bundle which dominates $Y$ through $f\colon X\to Y$, with fiber $g_1\subset G_1$ such that $f^{*}{(A_i)}\cdot g_1>0$, and by Step 3 of the same proof it follows that $\NE{(\xi)}=\mathbb{R}_{\geq 0}[f(g_1)]$ (see also Fig.2 in \cite{IO}). Then $\Psi$ is finite on the fiber of $\xi$. The claim follows by \cite[Lemma 4.9]{CAS4}. 

Assume that only one divisor between $A_{i}$'s is nef. For instance, suppose that it is $A_{1}$. Since $Y$ is a Fano 3-fold it has no small contractions, and the description of $\text{Nef}(Y)$ with $Y$ a MDS (see \cite[Def. 3.12 (3)]{MDS_NOTES}) together with \cite[Corollary 3.12]{MDS_NOTES} gives $\text{Nef}(Y)=\text{Mov}(Y)$. Therefore $A_{2}$ is not a movable divisor and by \cite[Remark 4.7]{MDS_NOTES} we have that $A_{2}$ is an exceptional divisor of a birational contraction of $Y$. 

More precisely, $A_{1}$ gives this birational contraction which contracts the other divisor $A_{2}$ to a point. Indeed, since $A_1$ is semiample, the linear system $|mA_1|$ defines a contraction $\varphi:= \varphi_{|mA_1|}\colon Y\to T$ for $m\gg 0$, $m\in \mathbb{N}$ with $mA_1=\varphi^* (A_T)$ for some ample divisor $A_T$ on $T$. Being $A_1\cap A_2 = \emptyset$, for every curve $C\subset A_2$ one has $mA_1\cdot C=0$, and using the projection formula we get $\varphi^* (A_T)\cdot C= A_T\cdot \varphi_*(C)=0$ then $A_2$ is contracted to a point in $T$. By \cite[Lemma 3.9]{CD15} and the main Theorem of \cite{S_WIS} it follows that either $Y\cong \mathbb{P}_{\mathbb{P}^{2}}(\mathcal{O}\oplus \mathcal{O}(1))$ or $Y\cong \mathbb{P}_{\mathbb{P}^{2}}(\mathcal{O}\oplus \mathcal{O}(2))$. 

We observe that there are no other possibilities: if neither $A_{1}$ or $A_{2}$ are nef divisors of $Y$, then they should be two exceptional divisors of two different extremal divisorial contractions of $Y$, but $Y$ is a Fano bundle of rank $2$ over $\mathbb{P}^{2}$ and we get a contradiction. Then claim $(a)$ follows. 

Assume that $\rho_{X}=6$. Applying again \cite[Corollary of Proposition 4.3]{WIS}, $Y$ is Fano and by Theorem \ref{main_thm} (b) there exists a smooth $\mathbb{P}^{1}$-fibration $\xi\colon Y\to S$, where either $S\cong \mathbb{P}^{1}\times \mathbb{P}^{1}$ or $S\cong \mathbb{F}_{1}$. As above, we denote by $A_{i}$ the two smooth components of the discriminant divisor of $f$, as in Proposition \ref{factorization} $(c)$. 

Suppose that both divisors $A_{i}$ are nef. The same proof of point $(a)$ allows us to deduce that $Y\cong S\times \mathbb{P}^{1}$ where $S$ is isomorphic to $\mathbb{P}^{1}\times \mathbb{P}^{1}$ or to $\mathbb{F}_{1}$. 
Assume that only one divisor between the $A_{i}$'s is nef. For simplicity, suppose that it is $A_{1}$. As above, we deduce that $A_{1}$ gives a birational contraction which contracts $A_{2}$ to a point. By \cite[Lemma 3.9]{CD15} and the main Theorem of \cite{S_WIS} it follows that $Y\cong \mathbb{P}_{\mathbb{P}^{1}\times \mathbb{P}^{1}}(\mathcal{O}(-1,-1)\oplus \mathcal{O})$. 

Assume that neither $A_{1}$ nor $A_{2}$ are nef divisors of $Y$. As we have already observed, this means that both divisors are exceptional divisors of two elementary divisorial contractions of $Y$ which we denote by $\Psi_{i}\colon Y\to Z_{i}$, for $i=1,2$. Since $A_{1}\cap A_{2}=\emptyset$ we get $\text{Exc}(\Psi_{1})\cap \text{Exc}(\Psi_{2})=\emptyset$.  

Using the main Theorem of \cite{S_WIS} and \cite[Table 3]{MM3} (see also \cite[Table 12.4]{IP99}) we show that in this case the only possible candidate is $Y\cong \mathbb{P}_{\mathbb{P}^{1}\times \mathbb{P}^{1}}(\mathcal{O}(0,-1)\oplus \mathcal{O}(-1,0))$. To this end, we prove that the other varieties with two divisorial contractions in the list given by the main Theorem of \cite{S_WIS} cannot be target of $f$. These varieties correspond to No. 17, 24, 30 in \cite[Table 3]{MM3}. Let us recall that No. 17 is a smooth divisor of tridegree $(1,1,1)$ in $\mathbb{P}^1\times \mathbb{P}^1\times \mathbb{P}^2$, No. 24. is $\mathbb{P}_{\mathbb{F}_1}(\beta^*(T_{\mathbb{P}^2}(-2)))$ and No. 30 is $\mathbb{P}_{\mathbb{F}_1}(\beta^*(\mathcal{O}_{\mathbb{P}^2}\oplus\mathcal{O}_{\mathbb{P}^2}(-1)))$, where $\beta:\mathbb{F}_1\to \mathbb{P}^2$ is the blow-up map. 

Assume by contradiction that $Y$ is isomorphic to No. 30. Using the Mori and Mukai's description of this Fano 3-fold as the blow-up of the strict transform of a line passing through the center of the blow-up map $\mathbb{P}(\mathcal{O}_{\mathbb{P}^2}\oplus\mathcal{O}_{\mathbb{P}^2}(-1))\to \mathbb{P}^3$ (see \cite[Table 3]{MM3}), it is easy to check that $\Exc{(\Psi_{1})}\cap \Exc{(\Psi_{2})}\neq \emptyset$, hence a contradiction. 

Mori and Mukai's classification shows that No. 17 has two elementary divisorial contractions, $\Psi_{i}\colon Y\to \mathbb{P}^{1}\times \mathbb{P}^{2}$ of type $(2,1)$ for $i=1,2$. Assume by contradiction that the target $Y$ is isomorphic to No. 17. This means that $\Exc{(\Psi_1)}\cap \Exc{(\Psi_2)}=\emptyset$. Being $Y$ a Fano bundle of rank $2$ on $\mathbb{F}_1$, we can apply Lemma \ref{lemma1} by taking $\Exc{(\Psi_1)}$ as the divisor $D$ in the Lemma's statement and setting $\psi:=\Psi_2$. In this way we reach a contradiction because $Y\ncong \mathbb{P}^1\times \mathbb{F}_1$. We use the same procedure to exclude the Fano 3-fold No. 24. Indeed, by Mori and Mukai's classification we know that this variety has two elementary divisorial contractions $\Psi_{i}$, where $\Psi_{2}\colon Y\to \mathbb{P}^{1}\times \mathbb{P}^{2}$ of type $(2,1)$. If we assume that the target $Y$ is isomorphic to No. 24 we reach a contradiction by applying Lemma \ref{lemma1} as above, and we get $(b)$. \\ 

Now we show $(c)$. Let us consider the morphism $\xi \circ f \colon X\to S$. Assume that $\rho_{X}\geq 7$, so that $\rho_{S}\geq 3$. By \cite[Theorem 1.1 (i)]{CAS3} one has $X\cong S_{1}\times S$, where $S_{1}$, $S$ are del Pezzo surfaces. Then by Remark \ref{case_product} one has $Y\cong \mathbb{P}^{1}\times S$, and $f$ is induced by a conic bundle $S_{1}\to \mathbb{P}^{1}$. Then we get $(c)$. 

Finally, since $\rho_{X}-\rho_{Y}=3$, then $\rho_{S_{1}}=4$, so that $\rho_{X}\leq 13$. We have already proved that $\rho_{X}\geq 5$, and this complete the proof. 
\end{proof}

As a consequence of the proof of the above proposition we get an explicit description of the discriminant divisor of our conic bundle in many cases. Then we have all ingredients that we need to show Theorem \ref{main_thm}.  

\begin{coro} \label{corollary_prop1} Let $f\colon X\to Y$ be a Fano conic bundle with $\dim{(X)}=4$, and $\rho_{X}-\rho_{Y}=3$. Let us denote by $\bigtriangleup_{f}=A_{1}\sqcup A_{2}$ the discriminant divisor of $f$, with $A_{i}$ smooth components of $\bigtriangleup_{f}$. If $\rho_X=5$ then $A_i\cong \mathbb{P}^{2}$ for some $i=1,2$. If $\rho_X=6$ then either $A_i\cong \mathbb{P}^{1}\times \mathbb{P}^{1}$ or $A_i\cong \mathbb{F}_{1}$ for some $i=1,2$. Moreover, if $Y\cong \mathbb{P}^{1}\times S$ with $S$ a del Pezzo surface such that $\rho_{S}\in \{1,2\}$, then $A_i\cong S$ for each $i=1,2$.
\end{coro}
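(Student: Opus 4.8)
The plan is to run through the three cases that organize the proof of Proposition \ref{prop1}---according to how many of the two disjoint smooth components $A_{1},A_{2}$ of $\bigtriangleup_{f}$ are nef---and in each of them to recognize the relevant $A_{i}$ either as a fiber of a contraction $Y\to\mathbb{P}^{1}$ or as a section of the smooth $\mathbb{P}^{1}$-fibration $\xi\colon Y\to S$. Recall from that proof that $Y\cong\mathbb{P}_{S}(\mathcal{E})$ for a rank $2$ bundle $\mathcal{E}$ on a del Pezzo surface $S$, with $S\cong\mathbb{P}^{2}$ when $\rho_{X}=5$ and $S\cong\mathbb{P}^{1}\times\mathbb{P}^{1}$ or $S\cong\mathbb{F}_{1}$ when $\rho_{X}=6$, and that $\bigtriangleup_{f}=A_{1}\sqcup A_{2}$. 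The guiding remark is elementary: any section $\sigma$ of $\xi$ satisfies $\sigma\cong S$, because $\xi|_{\sigma}\colon\sigma\to S$ is an isomorphism. Thus it suffices to realize each $A_{i}$ under consideration as a section of $\xi$, possibly after first exhibiting it as a fiber of some $Y\to\mathbb{P}^{1}$.

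First I would treat the case in which both $A_{i}$ are nef, which by the proof of Proposition \ref{prop1} forces $Y\cong\mathbb{P}^{1}\times S$ and produces a contraction $\Psi\colon Y\to\mathbb{P}^{1}$ with $\Psi(A_{i})$ a point. Then $A_{i}$ is contained in a fiber of $\Psi$; under the product structure these fibers are the irreducible divisors $\{pt\}\times S$, so from $\dim A_{i}=2$ we get $A_{i}=\{pt\}\times S$, which is at the same time a section of $\xi$. Hence $A_{i}\cong S$ for \emph{each} $i=1,2$. This already settles the ``Moreover'' statement: since among the targets of Proposition \ref{prop1}(a),(b) a product $\mathbb{P}^{1}\times S$ with $\rho_{S}\in\{1,2\}$ occurs only as $\mathbb{P}^{1}\times\mathbb{P}^{2}$, $\mathbb{P}^{1}\times\mathbb{P}^{1}\times\mathbb{P}^{1}$ or $\mathbb{F}_{1}\times\mathbb{P}^{1}$---all of which arise precisely in this both-nef situation---the hypothesis $Y\cong\mathbb{P}^{1}\times S$ places us here and yields $A_{i}\cong S$ for each $i$. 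Specializing $S$ also gives the product instances of the first two assertions.

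Next I would handle the remaining targets, where at least one $A_{i}$ is not nef and is therefore the exceptional divisor of an elementary divisorial contraction $\Psi_{i}\colon Y\to Z_{i}$ of the $\mathbb{P}^{1}$-bundle $Y=\mathbb{P}_{S}(\mathcal{E})$. For the decomposable bundles that occur, each extremal ray of $Y$ other than the one defining $\xi$ is resolved by contracting one of the two tautological sections of $\xi$, so $\Exc(\Psi_{i})$ is such a section and hence $\Exc(\Psi_{i})\cong S$. Concretely, when exactly one $A_{i}$ is nef the divisor $A_{2}$ contracted by $\varphi_{|mA_{1}|}$ is this section, giving $A_{2}\cong S$; when neither is nef---the case $Y\cong\mathbb{P}_{\mathbb{P}^{1}\times\mathbb{P}^{1}}(\mathcal{O}(0,-1)\oplus\mathcal{O}(-1,0))$---both $A_{1},A_{2}$ are the two disjoint tautological sections, so $A_{i}\cong\mathbb{P}^{1}\times\mathbb{P}^{1}$. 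In every non-product case we thus obtain $A_{i}\cong S$ for some $i$, completing the first two assertions.

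The step I expect to be the main obstacle is precisely this identification of $\Exc(\Psi_{i})$ with a tautological section of $\xi$, rather than with a $\xi$-vertical divisor of the form $\xi^{-1}(\Gamma)$. The cleanest route I see is to combine $A_{1}\cap A_{2}=\emptyset$ with a bookkeeping of the two non-$\xi$ extremal rays of $\mathbb{P}_{S}(\mathcal{E})$; the delicate instance is $\mathbb{P}_{\mathbb{P}^{1}\times\mathbb{P}^{1}}(\mathcal{O}(0,-1)\oplus\mathcal{O}(-1,0))$, whose two divisorial contractions are of type $(2,1)$, so that the exceptional divisors are contracted to curves and not to points. There one must check that these exceptional divisors are nonetheless the two sections of $\xi$, each isomorphic to $\mathbb{P}^{1}\times\mathbb{P}^{1}$, before invoking the guiding remark.
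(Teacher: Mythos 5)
Your proposal is correct and follows essentially the same route as the paper's own proof: a case analysis according to how many of the disjoint components $A_1,A_2$ are nef, recognizing the nef ones as fibers of the projection $Y\cong\mathbb{P}^1\times S\to\mathbb{P}^1$ and the non-nef ones as exceptional divisors of divisorial contractions which are sections of $\xi$, with the product hypothesis in the ``Moreover'' part forcing the both-nef case exactly as you argue. The only difference is presentational: where you propose the tautological-section bookkeeping for the delicate cases (including the type $(2,1)$ contractions of $\mathbb{P}_{\mathbb{P}^1\times\mathbb{P}^1}(\mathcal{O}(0,-1)\oplus\mathcal{O}(-1,0))$), the paper simply asserts the identification, referring to \cite[Lemma 3.9]{CD15} for the fact that the contracted divisor is a section of $\xi$.
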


\begin{proof} Take $\xi\colon Y\to S$ the smooth $\mathbb{P}^{1}$-fibration given by Theorem \ref{thm_tesi} $(b)$, with $S$ a del Pezzo surface. Therefore if $\rho_X=5$ one has $S\cong \mathbb{P}^{2}$, and if $\rho_X=6$ then either $S\cong \mathbb{P}^{1}\times\mathbb{P}^{1}$ or $S\cong \mathbb{F}_{1}$.  We reach the statement by analyzing the positivity of the divisors $A_i$. First we assume that only one between $A_1$ and $A_2$ is a nef divisor, say $A_1$. As we have already observed in the proof of Proposition \ref{prop1}, this implies that $A_2$ is the exceptional divisor of the birational contraction $\varphi:=\varphi_{|mA_1|}\colon Y\to T$ given by the linear system $|mA_1|$ for $m \gg 0$, $m\in \mathbb{N}$, which contracts $A_2$ to a point. More precisely, if $\rho_X=5$ this happens if $Y\cong \mathbb{P}_{\mathbb{P}^{2}}(\mathcal{O}\oplus \mathcal{O}(1))$ or $Y\cong \mathbb{P}_{\mathbb{P}^{2}}(\mathcal{O}\oplus \mathcal{O}(2))$, while if $\rho_X=6$ we are in the case in which $Y\cong \mathbb{P}_{\mathbb{P}^{1}\times \mathbb{P}^{1}}(\mathcal{O}(-1,-1)\oplus \mathcal{O})$ and one can check that the exceptional divisor $A_2$ of $\varphi$ is a section of $\xi\colon Y\to S$ \footnote{We refer the reader to \cite[Lemma 3.9]{CD15} for another more general explanation.}. Assume that both divisors $A_i$ are not nef. Again by the proof of Proposition \ref{prop1} this happens when $Y\cong \mathbb{P}_{\mathbb{P}^{1}\times \mathbb{P}^{1}}(\mathcal{O}(0,-1)\oplus \mathcal{O}(-1,0))$, and the divisors $A_i$ are exceptional divisors of the two different divisorial extremal contractions of $Y$, then $A_i\cong \mathbb{P}^{1}\times \mathbb{P}^{1}$ for each $i=1,2$. Suppose that both $A_i$ are nef divisors. In the proof of Proposition \ref{prop1} we deduced that $Y\cong \mathbb{P}^{1}\times S$ and each $A_i$ is sent to a point through the projection $\mathbb{P}^{1}\times S\to \mathbb{P}^1$, so that $A_i\cong S$ for $i=1,2$. Finally, assume that $Y\cong \mathbb{P}^{1}\times S$. We note that in this latter case both $A_i$ are nef divisors. Indeed, if at least one between $A_i$ is not nef, then $Y$ must be one of the varieties listed above, which are not products. Therefore the claim follows as above.
\end{proof}

\begin{proof}[\bf{\bf{Proof of Theorem \ref{main_thm}}}] The first implication follows by Proposition \ref{first_implication}.

Assume now that $X$ is a Fano 4-fold such that $X\ncong S_{1}\times S_{2}$, where each $S_{i}$ is a del Pezzo surface, and that there exists a conic bundle $f\colon X\to Y$ such that $\rho_{X}-\rho_{Y}=3$. By Proposition \ref{prop1} $(c)$ we are left to study the two possible cases: $\rho_{X}\in \{5,6\}$. Let us denote by $A_{i}$ the two smooth components of the discriminant divisor of $f$, as in Proposition \ref{factorization} $(c)$.

Assume that $\rho_{X}=5$. By Corollary \ref{corollary_prop1} there exists a smooth component $A_i$ of the discriminant divisor $\bigtriangleup_f$ such that $A_{i}\cong \mathbb{P}^{2}$. For simplicity, assume that it is $A_{1}$. Let us consider the divisor $E_{1}$ of $X$ as in Proposition \ref{factorization} $(b)$ such that $E_{1}\to A_{1}$ is a $\mathbb{P}^{1}$-bundle, so that $\rho_{E_{1}}=2$. Since $\dim \mathcal{N}_{1}(E_{1}, X)\leq \rho_{E_{1}}$ it follows that $\delta_{X}\geq 3$. Then $\delta_{X}=3$ by Theorem \ref{prodotto} $(a)$.

Assume that $\rho_{X}=6$. We apply the same argument of the previous case. The only difference here is that by Corollary \ref{corollary_prop1} it follows that there exists a component of the discriminant divisor, say $A_1$, such that either $A_{1}\cong \mathbb{P}^{1}\times \mathbb{P}^{1}$ or $A_{1}\cong \mathbb{F}_{1}$. In any case $\rho_{A_{1}}=2$, and taking the divisor $E_{1}\subset X$ as in Proposition \ref{factorization} $(b)$ we get $\rho_{E_{1}}=3$, and hence the statement follows as above.
\end{proof}

\begin{rema} It is easy to see that Theorem \ref{main_thm} holds in dimension $3$. Assume that $f\colon X\to S$ is a Fano conic bundle with $\dim{(X)}=3$, and $\rho_{X}-\rho_{S}=3$. By \cite[Proposition 4.16]{MM}, $S$ is a del Pezzo surface. Using Theorem \ref{thm_tesi} (b) there exists a smooth $\mathbb{P}^1$-fibration $S\to \mathbb{P}^1$, then either $S\cong \mathbb{P}^1\times \mathbb{P}^1$ or $S\cong \mathbb{F}_1$ and hence $\rho_X=5$. We recall by Proposition \ref{factorization} that the discriminant divisor $\bigtriangleup_{f}$ is given by two smooth disjoint curves $C_{i}$ on $S$, such that $f^{*}(C_i)=E_{i}+\hat{E}_i$ where $E_{i}\to C_{i}$ and $\hat{E}_{i}\to C_{i}$ are $\mathbb{P}^1$-bundle for $i=1,2$. Since $\rho_{C_i}=1$, we get $\rho_{E_i}=2$, and hence $\delta_X\geq 3$. If $X\ncong S\times \mathbb{P}^1$ with $S$ a del Pezzo surface, then by Theorem \ref{prodotto} (a) we deduce that $\delta_X=3$. In particular, if $\rho_{X}\geq 6$ then $X\cong S\times \mathbb{P}^1$ with $S$ a del Pezzo surface (see \cite[Theorem 2]{MM3}). In this case, by Remark \ref{case_product} we know all conic bundle structure on $X$, and $\delta_{X}=\rho_{S}-1$. Moreover, there exists an example of a Fano conic bundle $X\to \mathbb{F}_{1}$ where $X$ is a toric Fano $3$-fold which is not a product, $\rho_{X}=5$ and $\delta_{X}=3$ (see \cite[$\S$4.3.1]{TESI} for details). 
\end{rema}

\begin{rema} Theorem \ref{main_thm} holds for higher dimensions if we can exclude the case $\delta_{X}=2$. This is because using \cite[Lemma 3.10]{IO}, and Theorem \ref{prodotto} (a) it follows that if $f\colon X\to Y$ is a Fano conic bundle with ${\rho_{X}-\rho_{Y}=3}$, and $X\ncong S\times T$ where $S$ is a del Pezzo surface and $T$ is a $(n-2)$-dimensional Fano manifold, then $\delta_{X}\in \{2,3\}$.
\end{rema}

\begin{proof}[\bf{\bf{Proof of Corollary \ref{cor_1}}}] It is enough to prove the statement when $X\ncong S_{1}\times S_{2}$ where $S_{i}$ are del Pezzo surfaces for $i=1,2$. By Theorem \ref{main_thm} the two conditions required in the assumption are equivalent. Take a conic bundle $f\colon X\to Y$ where $\rho_{X}-\rho_{Y}=3$. By Proposition \ref{prop1} we know all possible targets $Y$. In particular, all of these $Y$ are rational varieties. Let us take a factorization for $f$ as in Proposition \ref{factorization} $(a)$, and let us denote by $g\colon X_{2}\to Y$ the elementary conic bundle of this factorization. By Theorem \ref{thm_tesi} $(b)$ $g$ is a smooth conic bundle. By \cite[Proposition 4.3]{FUJ16} it follows that $X_{2}\cong \mathbb{P}_{Y}(\mathcal{F})$, where $\mathcal{F}$ is a rank $2$ vector bundle on $Y$. Hence $X_{2}$ is rational and so is $X$.  
\end{proof}

We conclude this subsection by giving the proof of Theorem \ref{thm5}, in which due to the definition of $\delta_{X}$, and Theorem \ref{main_thm} we find a characterization of Fano $4$-folds admitting a conic bundle with relative Picard number $3$, in terms of the Picard number of prime divisors on $X$.

\begin{proof}[\bf{\bf{Proof of Theorem \ref{thm5}}}] Assume that such an $X$ admits a conic bundle structure $f\colon X\to Y$ with $\rho_{X}-\rho_{Y}=3$. By Proposition \ref{prop1}, one has $\rho_{X}\in \{5,6\}$. In the proof of Theorem \ref{main_thm} we have already observed that there exists a prime divisor $E_1\subset X$ such that $\rho_{E_1}=2$ if $\rho_{X}=5$, and $\rho_{E_1}=3$ if $\rho_{X}=6$, then we get the statement. 
 
Conversely, assume that there exists a prime divisor $D$ on $X$ such that $\rho_{D}=\rho_{X}-3$. Since $\dim{\mathcal{N}_{1}(D,X)}\leq \rho_{D}$, and $X$ is not a product of del Pezzo surfaces, by Theorem \ref{prodotto} $(a)$ we get $\delta_{X}=3$. Hence the statement follows by Theorem \ref{main_thm}.  
\end{proof}

\section{Case of $\rho_{X}=5$}

The goal of this section is to prove Theorem \ref{thm2}, which can be viewed as a first step towards the classification of Fano $4$-folds with $\delta_{X}=3$. To this end, we start focusing on case in which $X$ is a Fano $4$-fold that admits a conic bundle $f\colon X\to Y=\mathbb{P}^{1}\times \mathbb{P}^{2}$ with $\rho_{X}-\rho_{Y}=3$.  

\begin{prop} \label{lemma2} Let $X$ be a Fano 4-fold which admits a smooth $\mathbb{P}^{1}$-fibration $g\colon X\to \mathbb{P}^{1}\times \mathbb{P}^{2}$. Let us consider the contraction $\Psi \colon X\to \mathbb{P}^{1}\times \mathbb{P}^{2} \to \mathbb{P}^{1}$ and let $F_1$ and $F_2$ be two different fibers of $\Psi$. Assume that $F_i\to \mathbb{P}^2$ has a section $B_i$ such that the blow-up of $X$ along $B_i$ is Fano, for $i=1,2$. Then either $X\cong \mathbb{P}^{1}\times Z$, with $Z$ a Fano $\mathbb{P}^{1}$-bundle over $\mathbb{P}^{2}$, or $X$ is the blow-up along a section of the $\mathbb{P}^2$-bundle $\mathbb{P}_{\mathbb{P}^2}(\mathcal{O}\oplus\mathcal{O}(a)\oplus \mathcal{O}(b))$, where $(a,b)\in \{(0,0),(0,1),(0,2),(1,1),(1,2)\}$. 
\end{prop}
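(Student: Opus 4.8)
The plan is to turn the hypothesis on $g$ into a projective bundle presentation of $X$ and to run the whole analysis through a second, auxiliary fibration onto $\mathbb{P}^2$ whose fibers are Hirzebruch surfaces; the two cases of the conclusion will correspond to whether this general fiber is $\mathbb{F}_0$ or $\mathbb{F}_1$. First I would record that, since $g$ is a smooth $\mathbb{P}^{1}$-fibration over the rational variety $\mathbb{P}^{1}\times\mathbb{P}^{2}$, by \cite[Proposition 4.3]{FUJ16} one has $X\cong\mathbb{P}_{\mathbb{P}^{1}\times\mathbb{P}^{2}}(\mathcal{E})$ for a rank $2$ bundle $\mathcal{E}$, and $\rho_{X}=3$. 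The map $\Psi\colon X\to\mathbb{P}^{1}$ is smooth, so every fiber $F_{t}=\Psi^{-1}(t)$ is a smooth $\mathbb{P}^{1}$-bundle over $\mathbb{P}^{2}$ with trivial normal bundle; by adjunction $-K_{F_{t}}=(-K_{X})|_{F_{t}}$ is ample, so each $F_{t}$ is a \emph{Fano} $\mathbb{P}^{1}$-bundle over $\mathbb{P}^{2}$. By the main theorem of \cite{S_WIS} the only possibilities are $\mathbb{P}^{1}\times\mathbb{P}^{2}$, $\mathbb{P}_{\mathbb{P}^{2}}(\mathcal{O}\oplus\mathcal{O}(1))$, $\mathbb{P}_{\mathbb{P}^{2}}(\mathcal{O}\oplus\mathcal{O}(2))$ and the flag bundle $\mathbb{P}_{\mathbb{P}^{2}}(T_{\mathbb{P}^{2}}(-1))$.

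Next I would introduce $\eta\colon X\to\mathbb{P}^{2}$, the composite of $g$ with the second projection. Its fibers are $\mathbb{P}^{1}$-bundles over $\mathbb{P}^{1}$, that is Hirzebruch surfaces $\mathbb{F}_{e}$, and since the general fiber of a fiber type contraction of a Fano manifold is del Pezzo we get $e\in\{0,1\}$. The crucial translation is that each $B_{i}\cong\mathbb{P}^{2}$ is simultaneously a section of $\eta$ and a section of $F_{i}\to\mathbb{P}^{2}$; the latter corresponds to a sub-line-bundle $S_{i}\hookrightarrow\mathcal{E}_{i}:=\mathcal{E}|_{\{t_{i}\}\times\mathbb{P}^{2}}$, and $N_{B_{i}/X}$ fits into $0\to N_{B_{i}/F_{i}}\to N_{B_{i}/X}\to N_{F_{i}/X}|_{B_{i}}=\mathcal{O}_{B_{i}}\to 0$, with $N_{B_{i}/F_{i}}=\mathcal{O}_{\mathbb{P}^{2}}(c_{i})$. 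I would then impose that the blow-up of $X$ along $B_{i}$ is Fano: the criterion for the blow-up of a smooth surface in a Fano $4$-fold (ampleness of $-K$ on the exceptional bundle $\mathbb{P}(N_{B_{i}/X})\to B_{i}$ and on curves meeting $B_{i}$) both excludes the flag case, which carries no section with Fano blow-up, and bounds the degree $c_{i}$, hence the twisting of $S_{i}$.

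The dichotomy then reads off the type of the general fiber of $\eta$. If it is $\mathbb{F}_{0}=\mathbb{P}^{1}\times\mathbb{P}^{1}$, the two disjoint sections $B_{1},B_{2}$, together with the semiampleness/nef analysis of the two rulings and the product criterion \cite[Lemma 4.9]{CAS4} (as used already in Remark \ref{case_product}), split the fibration as a product, giving $X\cong\mathbb{P}^{1}\times Z$ with $Z$ a Fano $\mathbb{P}^{1}$-bundle over $\mathbb{P}^{2}$. If it is $\mathbb{F}_{1}$, the relative $(-1)$-curves sweep out a prime divisor which spans an extremal ray of the Fano $X$; contracting it yields a $K$-negative divisorial contraction $\pi\colon X\to W$ of type $(3,2)$ onto a smooth $\mathbb{P}^{2}$-bundle $W\to\mathbb{P}^{2}$, so that $X$ is the blow-up of $W\cong\mathbb{P}_{\mathbb{P}^{2}}(\mathcal{O}\oplus\mathcal{O}(a)\oplus\mathcal{O}(b))$ (after normalization $0\le a\le b$) along the contracted section $\sigma$.

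Finally I would pin down $(a,b)$: the Fano-ness of $X$ forces $W$ to be Fano under the $K$-negative contraction $\pi$, and combining this with the bounds coming from Fano-ness of the two blow-ups yields $b\le 2$ and $a+b\le 3$, leaving exactly $\{(0,0),(0,1),(0,2),(1,1),(1,2)\}$. I expect the main obstacle to be precisely this coordination step: upgrading the purely fiberwise information (each $F_{t}$ one of four bundles, each $\eta$-fiber an $\mathbb{F}_{e}$ with $e\le 1$) to global statements about $\mathcal{E}$ and about the extremal contraction $\pi$, while genuinely using that \emph{both} blow-ups are Fano and that $B_{1},B_{2}$ lie over two distinct points of $\mathbb{P}^{1}$. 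This rigidity is what prevents the splitting type of $\mathcal{E}$ from jumping along $\mathbb{P}^{1}$ and what confines $(a,b)$ to the finite list; the careful numerical bookkeeping in the Fano blow-up criterion and the explicit identification of $\pi$ with a blow-down to $W$ are the technical heart of the argument.
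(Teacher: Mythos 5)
Your skeleton is essentially the paper's: both arguments run through the composite $\psi=\xi\circ g\colon X\to\mathbb{P}^2$, whose fibers are $\mathbb{F}_0$ or $\mathbb{F}_1$, and split into a product case (handled via a second conic-bundle contraction and \cite[Lemma 4.9]{CAS4}) and a blow-up case (contracting the divisor swept out by the relative $(-1)$-curves). The gaps are in the blow-up case, and the decisive one is your claim that ``the Fano-ness of $X$ forces $W$ to be Fano under the $K$-negative contraction $\pi$''. This implication is false in general, and the statement you are proving contains the counterexample: for $(a,b)=(1,2)$ the variety $W=\mathbb{P}_{\mathbb{P}^2}(\mathcal{O}\oplus\mathcal{O}(1)\oplus\mathcal{O}(2))$ is \emph{not} Fano (on the curve lying over a line in $\mathbb{P}^2$ inside the section given by the trivial summand, $-K_W$ has degree $3-a-b=0$), yet a blow-up of it along a suitable section is a Fano $4$-fold. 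If your claim were correct, $(1,2)$ could not occur; note also that Fano-ness of $W$ would force $a+b\le 2$, contradicting your own stated bound $a+b\le 3$, so your final list is reverse-engineered rather than derived. The paper instead argues in two branches: when $Z$ is Fano it invokes the toric classification of Batyrev--Sato \cite{BAT99,SATO} to get $(a,b)\in\{(0,0),(0,1),(0,2),(1,1)\}$, and when $Z$ is \emph{not} Fano it invokes \cite[Proposition 3.6]{WIS} to get $(a,b)=(1,2)$. Your substitute, ``bounds coming from Fano-ness of the two blow-ups'', is never made precise and cannot replace these two inputs.

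Second, you assert without proof that $W\to\mathbb{P}^2$ is a \emph{decomposable} projective bundle. A priori a $\mathbb{P}^2$-bundle over $\mathbb{P}^2$ is only $\mathbb{P}(\mathcal{F})$ for some rank-$3$ bundle $\mathcal{F}$, which need not split: $\mathbb{P}(T_{\mathbb{P}^2}\oplus\mathcal{O})$ is a $\mathbb{P}^2$-bundle over $\mathbb{P}^2$ that is not of the asserted form. This is precisely where the paper uses the hypothesis that the blow-ups of $X$ along $B_1,B_2$ are Fano: the exceptional divisor $E$ of $h$ is covered by curves $\ell$ of anticanonical degree $1$, and if such an $\ell$ met $B_i$ without being contained in it, the strict transform $\widetilde{\ell}$ in the blow-up $X_i$ of $X$ along $B_i$ would satisfy $-K_{X_i}\cdot\widetilde{\ell}=1-E_i\cdot\widetilde{\ell}\le 0$; hence $E\cap(B_1\cup B_2)=\emptyset$, so $h(E),h(B_1),h(B_2)$ are mutually disjoint sections of $W\to\mathbb{P}^2$, and disjoint sections split $\mathcal{F}$ into a direct sum of line bundles (using vanishing of $H^1$ of line bundles on $\mathbb{P}^2$). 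This disjointness is, moreover, exactly what the proof of Theorem \ref{thm2} needs afterwards. Your plan uses the Fano-blow-up hypothesis only to bound the normal-bundle degrees $c_i$ of $B_i\subset F_i$; that numerical information yields neither the splitting of $\mathcal{F}$ nor the disjointness, so both the identification $W\cong\mathbb{P}_{\mathbb{P}^2}(\mathcal{O}\oplus\mathcal{O}(a)\oplus\mathcal{O}(b))$ and the final list of pairs $(a,b)$ remain unproven in your proposal.
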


\begin{proof}
Let us consider the contraction $\xi\colon \mathbb{P}^{1}\times \mathbb{P}^{2}\to \mathbb{P}^{2}$. Take another extremal contraction $h$ of $X$, different by $g$, and such that $\NE{(h)}\subset \NE{(\xi \circ g)}$. We get another factorization for $\psi:=\xi \circ g\colon X\to \mathbb{P}^{2}$, given by $X\stackrel{h}{\to}Z\stackrel{\xi^{\prime}}{\to}{\mathbb{P}^{2}}$. We have the following commutative diagram:
$$\xymatrix{
& Z \ar[dr]^{\xi^{\prime}} & \\
X \ar[ur]^h \ar[rr]^{\psi} \ar[dr]_g \ar@/_/[ddr]_{\Psi} & & \mathbb{P}^2 \\
& \mathbb{P}^1 \times \mathbb{P}^2 \ar[ur]_{\xi} \ar[d]_{\pi} & \\
& \mathbb{P}^1 &
} $$

Let $R$ be the extremal ray corresponding to $h$. We claim that $R$ is not contracted by $\Psi$ and hence $\Psi$ is finite on the fibers of $h$. To see this, assume by contradiction that $R$ is contracted by $\Psi$. Since a curve in $\mathbb{P}^1 \times \mathbb{P}^2$ cannot be contracted by both projections $\pi$ and $\xi$, if $R$ is contracted by $\Psi$ it follows that it is also contracted by $g$. Hence $g$ is the contraction of $R$, which is absurd by the choice of $h$.

We prove that all fibers of $h$ are at most one-dimensional. By construction every fiber of $h$ is contained in a fiber of $\psi$, then it has dimension at most $2$. Assume by contradiction that there exists a two-dimensional fiber of $h$. Then there is a fiber of $g$ which is contracted by $h$, a contradiction. Therefore, it follows from \cite[Corollary 1.4]{WIS} that $h$ is either a conic bundle or it is the blow-up of $Z$ along a smooth surface. For every $p\in \mathbb{P}^2$, the fiber $S_p:=\psi^{-1}(p)$ is a smooth del Pezzo surface since $\psi$ is a smooth morphism, and $S_p$ admits a smooth $\mathbb{P}^1$-fibration $g|_{S_p}:S_p \to \mathbb{P}^1$. Thus $S_p\cong \mathbb{P}^1\times \mathbb{P}^1$ or $S_p\cong \mathbb{F}_1$. By the deformation invariance of the Fano index (see for instance \cite[Proposition 6.2]{GJ18}), the fibers of $\psi$ are all isomorphic to one another. More precisely, they are all isomorphic to $\mathbb{P}^1\times\mathbb{P}^1$ when $h$ is a conic bundle, while they are isomorphic to $\mathbb{F}_1$ when $h$ is a blow-up.

Assume that $h$ is a conic bundle. Then by \cite[Corollary of Proposition 4.3]{WIS} we know that $Z$ is Fano. Moreover, since all the fibers of $\psi$ are isomorphic to $\mathbb{P}^1\times\mathbb{P}^1$ and $h$ is an equidimensional morphism, it follows that the restriction $h|_{S_p}:S_p \to \mathbb{P}^1$ has to be one of the two conic bundles on $\mathbb{P}^1\times \mathbb{P}^1$ and hence $Z$ is a $\mathbb{P}^1$-bundle over $\mathbb{P}^2$ and the fibers of $h$ are isomorphic to $\mathbb{P}^1$. Since $h:X\to Z$ is a $\mathbb{P}^1$-bundle and $\Psi:X\to \mathbb{P}^1$ is finite over the fibers of $h$, using \cite[Lemma 4.9]{CAS4} we get $X\cong \mathbb{P}^1\times Z$ with $Z$ a Fano $\mathbb{P}^1$-bundle over $\mathbb{P}^2$. From now on, assume that $h$ is birational. Let us denote by $E\subset X$ the exceptional divisor of $h$ and by $A=h(E)\subset Z$ the center of the blow-up. For the reader's convenience the rest of the proof is subdivided into three steps.  

\noindent {\bf Step 1}. We show that $E$ is a section of $g\colon X\to \mathbb{P}^{1}\times \mathbb{P}^{2}$. Moreover, if $B_{1}\subset F_{1}$ and $B_{2}\subset F_{2}$ are sections of the $\mathbb{P}^{1}$-bundles $F_{i}\to \mathbb{P}^2$ as in the assumption, then $E\cap (B_{1}\cup B_{2})=\emptyset$. 
\begin{proof}[\bf{\bf{Proof of Step 1}}] In order to prove that $E$ is a section of $g:X\to \mathbb{P}^1\times \mathbb{P}^2$ we first fix some notation. Given $(t,p)\in \mathbb{P}^1\times \mathbb{P}^2$, set $F_t := \Psi^{-1}(t)$ and $S_p:=\psi^{-1}(p)$. Then $\psi|_{F_t}:F_t\to \mathbb{P}^2$ is a $\mathbb{P}^1$-bundle with fiber $C_{(t,p)}=(\psi|_{F_t})^{-1}(p)\cong \mathbb{P}^1$, and we have that $C_{(t,p)}=F_t \cap S_p$.

As we have already observed $S_p\cong \mathbb{F}_1$ for every $p\in \mathbb{P}^2$, then $g|_{S_p}:S_p\to \mathbb{P}^1\times \{p\}$ corresponds to the $\mathbb{P}^1$-bundle morphism of $\mathbb{F}_1$, and $h|_{S_p}:S_p\to h(S_p)= \mathbb{P}^2$ is the contraction of the exceptional curve $E_p\subset S_p\cong \mathbb{F}_1$. 
It is easy to check that $E_{p}\subset E$ for each $p\in \mathbb{P}^{2}$, and since all fibers $S_{p}$ are different, we obtain that $\{E_{p}\}_{p\in \mathbb{P}^{2}}$ is a family of $(-1)$-curves which covers $E$. 

Take $\Gamma$ a fiber of $g$. We prove that $E\cdot \Gamma=1$. We observe that $\Gamma\subset F_{t}$ for some $t\in \mathbb{P}^{1}$, and by what we have already observed $S_{p}=\psi^{-1}(\psi(\Gamma))$ is such that $F_{t}\cap S_{p}\cong \Gamma$. Using the intersection theory of $\mathbb{F}_1\cong S_{p}$ one has $E_{p}\cdot \Gamma=1$. Since $\{E_{p}\}_{p\in \mathbb{P}^{2}}$ covers $E$, we deduce that $E$ is a section of $g$.

Now we show that the divisor $E$ is disjoint from the sections $B_1$ and $B_2$. We know that $h$ contracts the divisor $E$ onto the surface $A$, and $E$ is covered by curves of anticanonical degree 1 (cf. \cite[Lemma 1]{ISHII}). Using the above information and the assumption that the blow-up of $X$ along $B_1$ is Fano we observe that whenever $B_1$ intersects a fiber of $h$, this fiber must be contained in $B_1$. Indeed, let us denote by $\ell$ a non-trivial fiber of $h$, by $\varphi_1\colon X_1\to X$ the blow-up of $X$ along $B_1\subset X$ with exceptional divisor $E_1\subset X_1$ and by $\widetilde{\ell}\subset X_1$ the strict transform of $\ell$ in $X_1$. If $\ell$ intersects $B_1$ but $\ell \not\subset B_1$ we have that $E_1\cdot \widetilde{\ell}>0$ and hence
$$-K_{X_1}\cdot \widetilde{\ell}=\varphi_1^*(-K_X)\cdot \widetilde{\ell}-E_1\cdot \widetilde{\ell}=-K_X\cdot \ell - E_1 \cdot \widetilde{\ell} = 1 - E_1 \cdot \widetilde{\ell} \leq 0 $$
which is impossible if $X_1$ is Fano.

From the above observation we deduce that $E\cap B_1 = \emptyset$. Indeed, since $\Psi$ is finite on fibers of $h$ and $B_1\subset F_1$, $B_1$ cannot contain a fiber of $h$ and hence the fibers of $h$ are disjoint from $B_1$. We deduce by the same argument that $E\cap B_2=\emptyset$ and hence $E\cap (B_{1}\cup B_{2})=\emptyset$.
\end{proof}

\noindent {\bf Step 2}. We show that $\xi^{\prime}\colon Z\to \mathbb{P}^2$ is a $\mathbb{P}^2$-bundle and $A$, $h(B_1)$, $h(B_2)$ are mutually disjoint sections of $\xi^{\prime}$.  
\begin{proof}[\bf{\bf{Proof of Step 2}}] We know that for every $p\in \mathbb{P}^2$, $S_p=\psi^{-1}(p)\cong \mathbb{F}_1$, and $g_{\mid S_p}\colon S_p\to \mathbb{P}^1\times \{p\}$ is the $\mathbb{P}^1$-bundle morphism of $\mathbb{F}_1$. From the definition of $h$ it follows that $h_{\mid S_p}\colon S_p\to \mathbb{P}^2$ is the blow-down, so that ${\xi^{\prime}}^{-1}(p)=h(S_p)= \mathbb{P}^2$. This shows that the scheme-theoretic intersection $A\cap {\xi^{\prime}}^{-1}(p)$ consists of a single point. Thus $\xi^{\prime}_{\mid A}\colon A\to \mathbb{P}^2$ is birational. In Step 1 we have already proved that $E$ is a section of $g$, then $\rho_E=2$. Since $A$ is smooth we see that $E$ is a $\mathbb{P}^1$-bundle over $A$, so that $\rho_A=1$. This shows that $\xi^{\prime}_{\mid A}\colon A\to \mathbb{P}^2$ is an isomorphism, then $A$ is a section of $\xi^{\prime}$. Since $B_i$ is a section of $\psi$ for $i=1,2$, and by Step 1 we know that $E\cap (B_{1}\cup B_{2})=\emptyset$, it follows that also each $h(B_i)$ is a section of $\xi^{\prime}$, then the claim. 
\end{proof}

\noindent {\bf Step 3}. \label{Step7} We show that $Z\cong \mathbb{P}_{\mathbb{P}^{2}}(\mathcal{O}\oplus \mathcal{O}(a)\oplus \mathcal{O}(b))$, where $(a,b)\in \{(0,0),(0,1),(0,2),$ $ (1,1), (1,2)\}$.  
\begin{proof}[\bf{\bf{Proof of Step 3}}]
By Step 2 it follows that $Z\cong \mathbb{P}_{\mathbb{P}^{2}}(\mathcal{O}\oplus \mathcal{O}(a)\oplus \mathcal{O}(b))$ with $a,b \in \mathbb{Z}$. If $Z$ is Fano then by Batyrev-Sato classification of toric Fano 4-folds \cite{BAT99,SATO} we get $(a,b)\in \{(0,0),(0,1),(0,2), (1,1)\}$, while if $Z$ is not Fano using \cite[Proposition 3.6]{WIS} we obtain that $(a,b)=(1,2)$.
\end{proof}
Hence the statement follows. 
\end{proof}

\begin{lemm} \label{lemma3} Let $f\colon X\to \mathbb{P}^{1}\times \mathbb{P}^{2}$ be a Fano conic bundle with $\rho_{X}=5$. Then there exists a factorization as in Proposition \ref{factorization} $(a)$, where $X_{1}$, $X_{2}$ are Fano $4$-folds. 
\end{lemm}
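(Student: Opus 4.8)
The plan is to build the factorization abstractly and then promote the intermediate varieties from \emph{smooth} to \emph{Fano}. As $\rho_{X}-\rho_{Y}=3$, Proposition \ref{factorization} $(a)$ yields
$$X\stackrel{f_{1}}{\longrightarrow}X_{1}\stackrel{f_{2}}{\longrightarrow}X_{2}\stackrel{g}{\longrightarrow}Y=\mathbb{P}^{1}\times\mathbb{P}^{2},$$
with $f_{1},f_{2}$ blow-ups of smooth surfaces and $g$ an elementary conic bundle. By Theorem \ref{thm_tesi} $(b)$ the map $g$ is smooth, so $X_{2}\cong\mathbb{P}_{Y}(\mathcal{F})$ for a rank $2$ bundle $\mathcal{F}$ (cf. \cite[Proposition 4.3]{FUJ16}). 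By Corollary \ref{corollary_prop1} the two smooth components $A_{1},A_{2}$ of $\bigtriangleup_{f}$ are copies of $\mathbb{P}^{2}$; as recalled there they are the disjoint sections $\{t_{i}\}\times\mathbb{P}^{2}$ of $\xi\colon Y\to\mathbb{P}^{2}$, so in particular $N_{A_{i}/Y}$ is trivial. Since $A_{1}\cap A_{2}=\emptyset$, the centres of $f_{1},f_{2}$ are disjoint, and $X$ is the blow-up of $X_{2}$ along two disjoint sections $B_{1},B_{2}$ of $g$ lying over $A_{1},A_{2}$, while $X_{1}$ is the blow-up of $X_{2}$ along one of them. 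Thus it is enough to show that $X_{2}$ is Fano and that the blow-up of $X_{2}$ along a single $B_{i}$ is Fano.

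For this I would argue by anticanonical positivity. Writing $p\colon X\to X_{2}$ for the composite blow-up with exceptional divisors $\mathcal{E}_{1},\mathcal{E}_{2}$, we have $p^{*}(-K_{X_{2}})=-K_{X}+\mathcal{E}_{1}+\mathcal{E}_{2}$. For a curve $C\subset X_{2}$ not contained in $B_{1}\cup B_{2}$, its strict transform $\widetilde{C}$ gives
$$-K_{X_{2}}\cdot C=-K_{X}\cdot\widetilde{C}+\mathcal{E}_{1}\cdot\widetilde{C}+\mathcal{E}_{2}\cdot\widetilde{C}>0,$$
since $X$ is Fano and $\mathcal{E}_{j}\cdot\widetilde{C}\geq0$; the same holds for the single blow-up. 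Hence the only classes to control are those of lines $\lambda\subset B_{i}\cong\mathbb{P}^{2}$. Matching $\{E_{i},\hat{E}_{i}\}$ from Proposition \ref{factorization} $(b)$ with the pair $\{\mathcal{E}_{i},\widetilde{D}_{i}\}$, where $D_{i}:=g^{-1}(A_{i})$, I lift $\lambda$ to a curve $\widetilde{\lambda}\subset E_{i}\cap\hat{E}_{i}$ with $p_{*}\widetilde{\lambda}=\lambda$. From $f^{*}(A_{i})=E_{i}+\hat{E}_{i}$ and the triviality of $N_{A_{i}/Y}$ one gets $f^{*}(A_{i})\cdot\widetilde{\lambda}=A_{i}\cdot(f_{*}\widetilde{\lambda})=0$, so $\mathcal{E}_{i}\cdot\widetilde{\lambda}=-\hat{E}_{i}\cdot\widetilde{\lambda}$, and adjunction on $\hat{E}_{i}$ collapses everything to the clean identity
$$-K_{X_{2}}\cdot\lambda=-K_{X}\cdot\widetilde{\lambda}-\hat{E}_{i}\cdot\widetilde{\lambda}=-K_{\hat{E}_{i}}\cdot\widetilde{\lambda}.$$

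It then remains to prove $-K_{\hat{E}_{i}}\cdot\widetilde{\lambda}>0$, where $\hat{E}_{i}\cong D_{i}=\mathbb{P}_{\mathbb{P}^{2}}(\mathcal{F}|_{A_{i}})$ and $\widetilde{\lambda}$ is the restriction of the section $B_{i}$ over a line of $A_{i}$. I expect this last inequality to be the main obstacle: smoothness of $g$ alone does not suffice, since a smooth $\mathbb{P}^{1}$-bundle over a Fano base need not be Fano (e.g. $\mathbb{F}_{n}$ with $n\geq2$), so the Fano hypothesis on the total space $X$ must be fed in to bound the splitting type of $\mathcal{F}|_{A_{i}}$ and the self-intersection of $B_{i}$. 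Concretely I would turn $-K_{\hat{E}_{i}}\cdot\widetilde{\lambda}>0$ into a numerical bound on $c_{1}(\mathcal{F}|_{A_{i}})$ and on $\deg N_{B_{i}/D_{i}}$ along a line, and derive that bound from the ampleness of $-K_{X}$ applied to the covering family of exceptional curves of $p$ (each of $-K_{X}$-degree $1$) and to sections of $\hat{E}_{i}\to A_{i}$; if one prefers, the same positivity can be read off after identifying $\mathcal{F}$ through the del-Pezzo-fibre analysis of $\psi=\xi\circ g$ as in the proof of Proposition \ref{lemma2}, which forces $\mathcal{F}|_{A_{i}}$ into a short list and reduces the verification to Batyrev--Sato's classification \cite{BAT99,SATO}. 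Running the identical argument for the single blow-up of $X_{2}$ along $B_{i}$, whose exceptional curves again have $-K$-degree $1$, shows that $X_{1}$ is Fano; by symmetry the blow-up along the other section is Fano as well, which is exactly what is needed to apply Proposition \ref{lemma2}.
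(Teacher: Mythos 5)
Your reduction is sound as far as it goes: the discrepancy formula shows that only curves inside the blow-up centres can obstruct ampleness of $-K_{X_{2}}$ (and of $-K_{X_{1}}$), and your identity $-K_{X_{2}}\cdot\lambda=-K_{\hat{E}_{i}}\cdot\widetilde{\lambda}$ is correct --- it is, in reverse, exactly the adjunction computation in the paper's proof, where $D_{i}\cdot C=0$ is obtained from $D_{1}\equiv aD_{2}$ (injectivity of $\widetilde{g}^{*}$ applied to $A_{1}\equiv aA_{2}$) together with $D_{1}\cap D_{2}=\emptyset$. But at that point you stop: the positivity $-K_{\hat{E}_{i}}\cdot\widetilde{\lambda}>0$, which you yourself flag as ``the main obstacle'', is never proved, and neither of your two fallback plans closes it. Invoking the fibre analysis of Proposition \ref{lemma2} is circular, since that proposition (and its proof) takes as hypothesis that the total space of the smooth $\mathbb{P}^{1}$-fibration is already Fano --- precisely what you are trying to establish for $X_{2}$; and Batyrev--Sato classification applies only to toric varieties, whereas $X$ here is an arbitrary Fano $4$-fold.

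The genuine gap is conceptual as well as technical: you are trying to prove a \emph{stronger} statement than the lemma asserts, namely that the intermediates of the \emph{given} factorization are Fano, while the lemma only claims that \emph{some} factorization has Fano intermediates. The paper's proof exploits exactly this slack. By \cite[Remark 3.7]{IO} the factorization is not unique: over each discriminant component $A_{i}$ one may contract either $E_{i}$ or $\hat{E}_{i}$. If the target $\widetilde{X_{1}}$ of contracting $E_{1}$ fails to be Fano, then a curve $C\subset A_{1}^{\prime}$ with $-K_{\widetilde{X_{1}}}\cdot C\leq 0$ satisfies $-K_{D_{1}}\cdot C=-K_{\widetilde{X_{1}}}\cdot C-D_{1}\cdot C\leq 0$ (your identity again), so $\hat{E}_{1}\cong D_{1}$ is \emph{not} Fano; then \cite[Proposition 3.4]{WIS} --- contracting an exceptional divisor that is not a Fano variety on a Fano manifold yields a Fano target --- guarantees that contracting $\hat{E}_{1}$ \emph{instead of} $E_{1}$ produces a Fano $X_{1}$, and the same switch is repeated at the second step. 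This dichotomy-and-switch argument is what resolves the obstacle you isolated; without it (or some substitute establishing your unproved inequality, which may simply be false for an unlucky choice of factorization), your proposal does not yield the lemma.
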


\begin{proof} Let us take a factorization for $f$ as in Proposition \ref{factorization} $(a)$, given by:
$$\xymatrix{X \ar[r]^{\widetilde{f_1}} \ar@/^2.5pc/[rrr]^{f} & \widetilde{X_1} \ar[r]^{\widetilde{f_2}} & \widetilde{X_2} \ar[r]^{\widetilde{g}} & \mathbb{P}^{1}\times \mathbb{P}^{2}}$$

We denote by $A_{i}$ the two components of the discriminant divisor of $f$ as in Proposition \ref{factorization} $(c)$. By Proposition \ref{factorization} $(b)$ we know that there exist two prime divisors $E_{i}$, $\hat{E}_{i}$ of $X$ such that $f^{*}(A_{i})=E_{i}+\hat{E}_{i}$, and $E_{i}\to A_{i}$, $\hat{E}_{i}\to A_{i}$ are $\mathbb{P}^{1}$-bundles for $i=1,2$. Let us denote by $e_{i}\subset E_{i}$, $\hat{e_{i}}\subset \hat{E}_{i}$ the fibers for $i=1,2$. 

By the conic bundle structure, $D_{i}:=\widetilde{g}^{*}(A_{i})$ are $\mathbb{P}^{1}$-bundles over $A_{i}$, and each $D_{i}$ contains a section $A_{i}^{\prime}\cong A_{i}$, such that $f_{i}$ is the blow-up of $X_{i}$ along $A_{i}^{\prime}$ for $i=1,2$. 

By \cite[Remark 3.7]{IO} the factorization of $f$ is not unique and depends on the choice of the extremal rays spanned by $[e_{i}]$, $[\hat{e}_{i}]$. To fix the notation, assume that $\widetilde{f_{1}}$ corresponds to the contraction of the extremal ray $R_{1}=\mathbb{R}_{\geq 0}[e_{1}]$, and $\widetilde{f_{2}}$ is the contraction given by the extremal ray $R_{2}=\mathbb{R}_{\geq 0}[e_{2}]$. Using these facts, we first prove that up to changing the factorization of $f$ we obtain that the target of the first blow-up of the factorization of $f$ is a Fano variety. 

Assume that $\widetilde{X_{1}}$ is not Fano. Then there exists an irreducible curve $C\subset \widetilde{X_{1}}$ such that $-K_{\widetilde{X_{1}}}\cdot C\leq 0$, and $C\subset A_{1}^{\prime}$. We show that $D_{1}$ is not Fano. To this end, we first prove that $D_{1}$ is a nef divisor which has intersection zero with every curve contained in it. Indeed, by Corollary \ref{corollary_prop1} we deduce that the divisors $A_i$ are both fibers of the projection $\mathbb{P}^{1}\times \mathbb{P}^{2}\to \mathbb{P}^{1}$, then $A_1\equiv a A_2$ for some $a\in \mathbb{R}$. By the injectivity of $\widetilde{g}^{*}$ we obtain that $D_1\equiv a D_2$. Therefore, if $C_1\subset D_1$ is an irreducible curve, then $D_1\cdot C_1=a D_2\cdot C_1 = 0$ because $D_1\cap D_2=\emptyset$. Hence by the adjunction formula we get $-K_{D_{1}}\cdot C=-K_{\widetilde{X_{1}}}\cdot C-D_{1}\cdot C\leq 0$. Since $\hat{E}_{1}$ is the strict transform in $X$ of the divisor $D_{1}$, it follows that $\hat{E}_{1}$ is not a Fano variety. 

By Corollary \ref{corollary_prop1}, we know that $A_{i}^{\prime}\cong \mathbb{P}^{2}$, so that $\rho_{\hat{E}_{1}}=2$. Now we replace $\widetilde{f_{1}}$ with the contraction of the extremal ray spanned by $[\hat{e_{1}}]$. In this way we get another blow-up $f_{1}\colon X\to X_{1}$ where $X_{1}$ is Fano by \cite[Proposition 3.4]{WIS}. 

Let us consider the factorization of $f$ given by $X\stackrel {f_{1}}{\to} X_{1} \stackrel{\widetilde{f_{2}}}{\to} \widetilde{X_{2}}\stackrel{\widetilde{g}}{\to} \mathbb{P}^{1}\times \mathbb{P}^{2}$. 

If $\widetilde{X_{2}}$ is Fano we are done. Assume that it is not. We can proceed in the same way, replacing $A_{1}^{\prime}$ with $A_{2}^{\prime}$, which is the center of the blow-up $\widetilde{f_{2}}$. With a similar method, it is easy to check that $D_{2}$ is not Fano, and since its strict transform in $X_{1}$ is isomorphic to $\hat{E_{2}}$, we deduce that $\hat{E_{2}}$ is not Fano. 

Replacing $\widetilde{f_{2}}$ with $f_{2}\colon X_{1} \to X_{2}$ which corresponds to the contraction of the extremal ray spanned by $[\hat{e}_{2}]$, we complete the proof applying again \cite[Proposition 3.4]{WIS}. 
\end{proof}

\begin{rema} The proof of Lemma \ref{lemma3} depends neither on the dimension of $X$ nor on $\rho_{X}$. Indeed, the same argument shows the following more general statement. Assume that $X$ is a Fano manifold of arbitrary dimension, and let $f\colon X\to Y$ be a non-elementary Fano conic bundle with $r:=\rho_{X}-\rho_{Y}$. Let us take a factorization for $f$ as in Proposition \ref{factorization} $(a)$, and let us denote by $g\colon X_{r-1}\to Y$ the elementary conic bundle in this factorization. If $\rho_{A_{i}}=1$, and $D_{i}:=g^{*}(A_{i})$ is a nef divisor for $i=1,\dots,r-1$, then there exists a factorization of $f$ as in Proposition \ref{factorization} $(a)$, where each of the $X_{i}$ is a Fano variety.  
\end{rema}

\begin{proof}[\bf{\bf{Proof of Theorem \ref{thm2}}}] Let us take a factorization for $f$ as in Proposition \ref{factorization} (a). We denote by $g\colon X_{2}\to Y$ the associated elementary conic bundle, and by $A_{i}$ the two disjoint components of $\bigtriangleup_{f}$. By Proposition \ref{prop1} we know all possible $Y$. First we assume that $Y\cong \mathbb{P}^{1}\times \mathbb{P}^{2}$ and, by Corollary \ref{corollary_prop1}, we deduce that in this case $A_{i}\cong \mathbb{P}^2$ for each $i=1,2$. Applying Lemma \ref{lemma3} to $f\colon X\to Y\cong \mathbb{P}^{1}\times \mathbb{P}^{2}$, and keeping the same notation of this lemma, we can suppose that $X_1$ and $X_{2}$ are Fano. We observe that the contraction $\Psi\colon X_2 \stackrel{g}{\to} \mathbb{P}^{1}\times \mathbb{P}^{2}\to \mathbb{P}^{1}$ satisfies the assumptions of Proposition \ref{lemma2}. To this end, we first observe that $g$ is smooth by Theorem \ref{thm_tesi} $(b)$. Moreover, by the proof of \cite[Proposition 3.5]{IO} it follows that each $\mathbb{P}^{1}$-bundle $F_i:=g_{\mid g^{-1}(A_i)}\colon g^{-1}(A_i)\to A_i$ has a section $B_i\cong A_i$, and such $B_i$ corresponds to the center of the blow-ups $f_i$ in the factorization of $f$. Hence we can apply Proposition \ref{lemma2} to the contraction $\Psi$, and taking the two $\mathbb{P}^{1}$-bundles $F_i$ which are fibers of $\Psi$. Then it follows that either $X_{2}$ is isomorphic to one of the varieties as in point $(1)$ or that $X_2$ is isomorphic to the blow-up of a section $A$ of the $\mathbb{P}^2$-bundle $Z\cong \mathbb{P}_{\mathbb{P}^2}(\mathcal{O}\oplus\mathcal{O}(a)\oplus\mathcal{O}(b))$, where $(a,b)\in \{(0,0),(0,1),(0,2),(1,1),(1,2)\}$. By the proof of Proposition \ref{lemma2} we recall that this last case occurs when there is a birational map $h\colon X_{2}\to Z$ which is a blow-up along a section $A$ of $Z$, such that $A\cap (h(B_{1})\cup h(B_{2}))=\emptyset$. Moreover, by Step 1 in the proof of Proposition \ref{lemma2}, it follows that $h(B_{i})\cong B_{i}$ for $i=1,2$, and by Step 2 of the same Proposition we know that $A$, $B_{1}$, $B_{2}$ are disjoint sections of $Z$.  Therefore using the factorization of $f$ we conclude that $X$ is the blow-up along these three disjoint sections of $Z$. 

Finally, assume that $Y\cong \mathbb{P}_{\mathbb{P}^{2}}(\mathcal{O}\oplus \mathcal{O}(1))$ or $Y\cong \mathbb{P}_{\mathbb{P}^{2}}(\mathcal{O}\oplus \mathcal{O}(2))$. Being $g\colon X_{2}\to Y$ a smooth $\mathbb{P}^{1}$-fibration, and $Y$ rational, then applying \cite[Proposition 4.3]{FUJ16} we deduce that $X_{2}$ is as in point $(2)$ and hence the statement.  
\end{proof}

\section{Toric Fano conic bundles with relative Picard dimension $3$}

The purpose of this section is to provide examples proving that all Fano 3-folds $Y$ listed in Proposition \ref{prop1} appear as target of some Fano conic bundle $f\colon X\to Y$ with $\rho_X-\rho_Y=3$. In particular, we give explicit examples of toric Fano conic bundle, \textit{i.e.} Fano conic bundle $f\colon X\to Y$ where $X$ is toric. 

We refer the reader to \cite{CLS11} for the general theory of toric varieties and to \cite{FUJ03,FS04,MAT02,REID} for details on the toric MMP. 

Let $N\cong \mathbb{Z}^n$ be a lattice and let $N_\mathbb{R}\cong \mathbb{R}^n$ be its real scalar extension. Let $\Delta_X\subseteq N_\mathbb{R}$ be a fan and let us denote by $\Delta_X(k)$ the set of $k$ dimensional cones in $\Delta_X$ and by $X=X(\Delta_X)$ the associated toric variety. If $\sigma\in \Delta_X(k)$ is a cone generated by the primitive vectors $\{u_1,\ldots,u_k\}$ we will denote by $V(\sigma)$ or $V(u_1,\ldots,u_k)$ the closed invariant subvariety of codimension $k$ in $X$. In particular, each $\rho\in \Delta_X(1)$ corresponds to an invariant prime divisor $V(\rho)$ on $X$; such a cone is called a {\it ray}. Similarly, each cone of codimension one $\omega\in \Delta_X(n-1)$ corresponds to an invariant rational curve on $X$; such a cone is called a {\it wall}. 

If $X$ is a smooth projective toric variety of dimension $n$ then every extremal ray $R\subseteq \operatorname{NE}(X)$ corresponds to an invariant curve $C_\omega$ such that $R=\mathbb{R}_{\geq 0}[C_\omega]$ or, equivalently, to a wall $\omega\in \Delta_X(n-1)$.

Let us suppose that such a wall $\omega$ is generated by the primitive vectors $\{u_1,\ldots,u_{n-1}\}$. Since the fan $\Delta_X$ is simplicial, $\omega$ separates two maximal cones $\sigma=\operatorname{cone}(u_1,\ldots,u_{n-1},u_n)$ and $\sigma'=\operatorname{cone}(u_1,\ldots,u_{n-1},u_{n+1})$, where $u_n$ and $u_{n+1}$ are primitive on rays on opposite sides of $\omega$. The $n+1$ vectors $u_1,\ldots,u_{n+1}$ are linearly dependent. Hence, they satisfy a so called {\it wall relation} (see \cite[page 303]{CLS11}):
$$b_nu_n + \sum_{i=1}^{n-1}b_iu_i+b_{n+1}u_{n+1}=0, $$ 
where $b_n=b_{n+1}=1$ and $b_i\in \mathbb{Z}$ for $i=1,\ldots,n-1$. By reordering if necessary, we can assume that
\begin{center}
$\begin{array}{ccc}
b_i<0 & \text{for} & 1\leq i \leq \alpha \\
b_i=0 & \text{for} & \alpha+1\leq i \leq \beta \\
b_i>0 & \text{for} & \beta+1\leq i \leq n+1.
\end{array}$ 
\end{center}
The wall relation and the signs of the coefficients involved allow us to describe the nature of the associated contraction. More precisely, 
\begin{itemize}
 \item[(1)] Fiber type contractions correspond to $\alpha=0$
 \item[(2)] Divisorial contractions correspond to $\alpha=1$
 \item[(3)] Small contractions correspond to $\alpha>1$.
\end{itemize}

In all cases we have that the extremal contraction associated to $R$ is of type $(n-\alpha,\beta)$. We refer the reader to \cite[\S 2]{REID} or \cite[\S 14.2]{MAT02} for details. Moreover, toric Fano 4-folds were classified by Batyrev and Sato into 124 families by using the language of {\it primitive collections} \cite{BAT91,BAT99,SATO}. 

In our analysis, we are mainly interested in extremal $K-$negative contractions between smooth projective varieties $\varphi_R\colon X\to X_R$ of type $(n-1,n-2)$ and type $(n,n-1)$ (\textit{i.e.}, an elementary conic bundle). In the toric case, the associated wall relations are the following:
\begin{itemize}
 \item[(a)] Type $(n-1,n-2)$: $u_n - u_1 + u_{n+1} = 0$. It corresponds geometrically to the blow-up of the codimension two subvariety $A=V(u_n,u_{n+1})\subset X_R$, with exceptional divisor $V(u_1)\subset X$. The toric morphism $\varphi_R:X\to X_R$ corresponds to the star subdivision of $\Delta_{X_R}$ relative to the ray generated by the vector $u_1 = u_n + u_{n+1}$ (see \cite[Definition 3.3.7]{CLS11}).
 \item[(b)] Type $(n,n-1)$: $u_n + u_{n+1} = 0$. It corresponds geometrically to a projective vector bundle structure $X \cong \mathbb{P}_{X_R}(\mathcal{E})$ where $\mathcal{E}$ is a decomposable rank $2$ vector bundle on $X_R$, for which $V(u_n)\cong V(u_{n+1})\cong X_R$ are disjoint sections of $X$. The toric morphism $\varphi_R: X\to X_R$ is induced by the quotient map ${\Phi_R: N_\mathbb{R} \to N_\mathbb{R}\slash \operatorname{Span}(u_n)}$.
\end{itemize}

Let us recall that in the Fano conic bundle case $f\colon X\to Y$ with $\rho_X-\rho_{Y}= 3$, by Proposition \ref{factorization} $(a)$ and Theorem \ref{thm_tesi} $(b)$ it follows that there exists a factorization of $f$
$$\xymatrix{X \ar[r]^{f_1} \ar@/^2pc/[rrr]^{f} & X_1 \ar[r]^{f_2} & X_2 \ar[r]^{g} & Y}$$
\noindent where each $f_{i}$ is a blow-up along a smooth surface of $X_{i}$ for $i=1,2$, and $g$ is a smooth elementary conic bundle. By \cite[Proposition 3.5 (1)]{IO} the centers of the blow-ups are isomorphic to the smooth components of the discriminant divisor of $f$. When $X$ is a Fano 4-fold some of the possible blow-up centers are described in Corollary \ref{corollary_prop1}. 

We denote by $\operatorname{Bl}_{A_i}(X_i)$ the blow-up of $X_{i}$ along $A_{i}$. From now on we keep the above notation for a factorization of $f$ and we use Batyrev-Sato notation as in \cite[\S 4]{BAT99} and \cite[Table 1]{SATO}.

\subsection*{Case of $\rho_X=5$.} 

For a toric Fano conic bundle $f\colon X\to Y$ with $\rho_X = 5$ and $\rho_{X}-\rho_{Y}=3$, we know by Proposition \ref{prop1} $(a)$ that $Y\cong \mathbb{P}_{\mathbb{P}^2}(\mathcal{O}\oplus \mathcal{O}(a))$ for some $a\in \{0,1,2\}$. In particular $X$ admits a locally trivial toric bundle over $\mathbb{P}^2$ whose fiber is a Del Pezzo surface with Picard rank 4. Toric Fano 4-folds admitting such a fibration onto $\mathbb{P}^2$ were considered by Batyrev in \cite[\S 3.2.9]{BAT99} and their combinatorial type is denoted by $K$ in \cite[\S 4]{BAT99}. More precisely, we have the following:

\begin{prop}
Let $X$ be a toric Fano 4-fold with $\rho_X = 5$. Assume that there exists a conic bundle $f\colon X\to Y$ such that $Y\cong \mathbb{P}_{\mathbb{P}^2}(\mathcal{O}\oplus \mathcal{O}(a))$ for some $a\in \{0,1,2\}$. Then $X \cong K_i$ for some $i\in \{1,2,3,4\}$.
\end{prop}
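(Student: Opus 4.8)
The plan is to build, out of the conic bundle $f$ and the scroll structure of $Y$, an explicit smooth toric fibration $\psi\colon X\to\mathbb{P}^{2}$ whose fibres are the degree-six del Pezzo surface, and then to read off $X$ from Batyrev's enumeration of the corresponding combinatorial type $K$.

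First I would pass to the factorization $X\stackrel{f_{1}}{\to}X_{1}\stackrel{f_{2}}{\to}X_{2}\stackrel{g}{\to}Y$ of Proposition \ref{factorization} $(a)$ supplied by Theorem \ref{thm_tesi} $(b)$, in which $g$ is a \emph{smooth} elementary conic bundle (hence a $\mathbb{P}^{1}$-bundle) and each $f_{i}$ is the blow-up of a smooth surface. Composing with the scroll projection $\pi_{Y}\colon Y\cong\mathbb{P}_{\mathbb{P}^{2}}(\mathcal{O}\oplus\mathcal{O}(a))\to\mathbb{P}^{2}$ I set $\psi:=\pi_{Y}\circ f$. Then $\pi_{Y}\circ g\colon X_{2}\to\mathbb{P}^{2}$ is a composite of two smooth $\mathbb{P}^{1}$-bundles, hence a smooth fibration whose fibres are a fixed Hirzebruch surface. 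By \cite[Proposition 3.5]{IO} the centres of the two blow-ups are isomorphic to the components $A_{1},A_{2}\cong\mathbb{P}^{2}$ of the discriminant $\bigtriangleup_{f}$ (Corollary \ref{corollary_prop1}); since $A_{1}\cap A_{2}=\emptyset$ they are \emph{disjoint sections} of $\pi_{Y}\circ g$, so blowing them up amounts, on each fibre, to blowing up the Hirzebruch surface at two distinct points. Every fibre of $\psi$ is therefore a smooth del Pezzo surface of Picard number $4$, whence $\psi$ is a smooth morphism. As $f$ is a composition of toric extremal contractions and $\pi_{Y}$ is a toric bundle projection, all these morphisms are toric, so $\psi$ realises $X$ as a locally trivial toric bundle over $\mathbb{P}^{2}$.

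Next I would identify the fibre. The unique smooth toric del Pezzo surface of Picard number $4$ is $\mathrm{dP}_{6}$, the blow-up of $\mathbb{P}^{2}$ at its three torus-fixed points; hence $\psi\colon X\to\mathbb{P}^{2}$ is a locally trivial toric $\mathrm{dP}_{6}$-bundle, consistently with $\rho_{X}=\rho_{\mathbb{P}^{2}}+\rho_{\mathrm{dP}_{6}}=1+4=5$. This is exactly the situation of the combinatorial type $K$ considered in \cite[\S 3.2.9]{BAT99}: a toric Fano $4$-fold fibred over $\mathbb{P}^{2}$ with del Pezzo fibre of Picard rank $4$. The classification in \cite[\S 4]{BAT99} (see also \cite[Table 1]{SATO}) enumerates the toric Fano $4$-folds of type $K$ as the four families $K_{1},K_{2},K_{3},K_{4}$, and therefore $X\cong K_{i}$ for some $i\in\{1,2,3,4\}$, as claimed.

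I expect the main obstacle to be the combinatorial matching in the last paragraph: one must verify that the geometric fibration $\psi$ forces the fan $\Delta_{X}$ to be a split fan over the fan of $\mathbb{P}^{2}$ with fibre the hexagonal fan of $\mathrm{dP}_{6}$, so that $X$ genuinely belongs to Batyrev's type $K$ rather than to a different fibration pattern, and then to confirm that the smoothness and reflexivity constraints together with $\rho_{X}=5$ leave precisely the four polytopes $K_{1},\dots,K_{4}$. The key geometric input feeding into this matching is the verification that the two blow-up centres are disjoint toric sections of the Hirzebruch-surface bundle $X_{2}\to\mathbb{P}^{2}$, which is what guarantees that every fibre of $\psi$ is uniformly $\mathrm{dP}_{6}$ and that $\psi$ is locally trivial.
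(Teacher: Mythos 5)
Your proposal is correct and follows essentially the same route as the paper: the paper likewise composes $f$ with the scroll projection $Y\to\mathbb{P}^2$ to exhibit $X$ as a locally trivial toric fibration over $\mathbb{P}^2$ whose fibers are del Pezzo surfaces of Picard rank $4$, and then identifies $X$ with one of the type-$K$ varieties $K_1,\dots,K_4$ via Batyrev's classification \cite[\S 3.2.9, \S 4]{BAT99} and \cite[Table 1]{SATO}. The details you supply (the factorization through $X_2$, the disjoint blow-up centres coming from the discriminant components, and the identification of the fibre as the degree-six del Pezzo surface) are precisely the steps the paper leaves implicit in the paragraph preceding the proposition.
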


\begin{exem}\label{Example: Toric Picard 5}
The following examples show that all the varieties $K_1, K_2, K_3, K_4$ admit a conic bundle $f\colon X\to Y$ onto $Y\cong \mathbb{P}_{\mathbb{P}^2}(\mathcal{O}\oplus \mathcal{O}(a))$ for some $a\in \{0,1,2\}$. 

Let us denote by $\{e_1,e_2,e_3,e_4\}$ the canonical basis of $N_\mathbb{R}\cong \mathbb{R}^4$. Let us explain in more detail the method used to provide the examples.

We start by considering suitable toric Fano 4-folds $X_2$ with $\rho_X - \rho_{X_2} = 2$ such that there is an elementary smooth conic bundle $g\colon X_2\to Y$, where $Y$ is one of the above listed varieties\footnote{This can be easily checked by looking at the corresponding fans. We refer the interested reader to the auxiliary files accompanying the Macaulay2 package \url{NormalToricVarieties} for the complete list of fans of the 124 toric Fano 4-folds in Batyrev-Sato classification.}. Afterwards, we have to blow-ups two suitable smooth surfaces from $X_{2}$, which in this case we choose to be both isomorphic to $\mathbb{P}^2$ (c.f. Corollary \ref{corollary_prop1}). 

Let us denote by $A_{1}$, $A_{2}$ these centers. By the conic bundle structure, $A_{i}$'s must be transversal with respect to the fibers of the elementary conic bundle $g\colon X_2\to Y$. To this end, it can be checked by using Macaulay2 \cite{M2} whether the blow-up $X$ along $A_1, A_2$ is Fano or not. Then by computing the self-intersection number $(-K_X)^4$ and comparing with \cite[\S 4]{BAT99} and \cite[Table 1]{SATO} we can determine the corresponding varieties $X_1$ and $X$ in Batyrev-Sato classification.
\begin{enumerate}
\item Let $X_2:= D_5 \cong \mathbb{P}^1\times \mathbb{P}_{\mathbb{P}^2}(\mathcal{O}\oplus \mathcal{O}(2))$. The primitive generators of the rays of the fan of $X_2$ are given by
 \begin{equation*}\begin{split}\Delta_{X_2}(1)=\{& u_1 = e_1, u_2 = e_2, u_3 = -e_1-e_2+2e_3,\\ & u_4=e_3,u_5=-e_3,u_6 = e_4,u_7=-e_4 \}.\end{split}\end{equation*}                                                                                                                  
  The elementary conic bundle $g\colon X_2 \to  \mathbb{P}^1 \times \mathbb{P}^2$ induced by the second factor $\mathbb{P}_{\mathbb{P}^2}(\mathcal{O}\oplus \mathcal{O}(2))\to \mathbb{P}^2$ is given by the quotient map ${G\colon \mathbb{R}^4 \to \mathbb{R}^4\slash \langle e_3 \rangle}$. Let $A_1 = V(u_4,u_6)$ and $A_2=V(u_5,u_7)$, where $A_i \cong \mathbb{P}^2$ for $i=1,2$. We verify with Macaulay2 that $X_1:= \operatorname{Bl}_{A_1}(X_2)\cong H_3$ and $X:=\operatorname{Bl}_{A_1,A_2}(X_2)\cong K_1$ are Fano. We get the following factorization of $f:X\to Y$.
  $$X\cong K_1 \xrightarrow{f_1} X_1 \cong H_3 \xrightarrow{f_2} X_2 \cong D_5 \xrightarrow{g} Y \cong \mathbb{P}^1 \times \mathbb{P}^2.$$
 
 \item Let $X_2 := D_3 \cong \mathbb{P}_Y(\mathcal{E})$, where $\mathcal{E}$ is a decomposable rank 2 vector bundle on $Y\cong \mathbb{P}_{\mathbb{P}^2}(\mathcal{O}\oplus \mathcal{O}(1))$. The primitive generators of the rays of the fan of $X_2$ are given by
 \begin{equation*}\begin{split}\Delta_{X_2}(1)=\{& u_1 = e_1, u_2 = e_2, u_3 =-e_1-e_2+e_3+e_4, \\ & u_4=e_3,u_5=-e_3+e_4,u_6 = e_4,u_7=-e_4 \}. \end{split}\end{equation*}
 The elementary conic bundle $g:D_3 \to \mathbb{P}_{\mathbb{P}^2}(\mathcal{O}\oplus \mathcal{O}(1))$ is induced by the quotient map ${G:\mathbb{R}^4 \to \mathbb{R}^4\slash \langle e_4 \rangle}$. Let $A_1 = V(u_4,u_7)$ and $A_2=V(u_5,u_7)$, where $A_i \cong \mathbb{P}^2$ for $i=1,2$. We verify with Macaulay2 that $X_1:=\operatorname{Bl}_{A_1}(X_2)\cong H_2$ and $X:=\operatorname{Bl}_{A_1,A_2}(X_2)\cong K_2$ are Fano. We get the following factorization of $f\colon X\to Y$.
 $$X\cong K_2 \xrightarrow{f_1} X_1 \cong H_2 \xrightarrow{f_2} X_2 \cong D_3 \xrightarrow{g} Y\cong \mathbb{P}_{\mathbb{P}^2}(\mathcal{O}\oplus \mathcal{O}(1)).$$
 
 \item Let $X_2 := D_{16} \cong \mathbb{P}_Y(\mathcal{E})$, where $\mathcal{E}$ is a decomposable rank 2 vector bundle on $Y\cong \mathbb{P}_{\mathbb{P}^2}(\mathcal{O}\oplus \mathcal{O}(2))$. The primitive generators of the rays of the fan of $X_2$ are given by
 \begin{equation*}\begin{split}\Delta_{X_2}(1)=\{ & u_1 = e_1, u_2 = e_2, u_3 = -e_1-e_2+e_3-e_4, \\ & u_4=e_3,u_5=-e_3+e_4,u_6 = e_4,u_7=-e_4 \}. \end{split}\end{equation*}
 The elementary conic bundle $g:D_{16} \to \mathbb{P}_{\mathbb{P}^2}(\mathcal{O}\oplus \mathcal{O}(2))$ is induced by the quotient map ${G:\mathbb{R}^4 \to \mathbb{R}^4\slash \langle e_4 \rangle}$. Let $A_1 = V(u_4,u_7)$ and $A_2=V(u_5,u_7)$, where $A_i \cong \mathbb{P}^2$ for $i=1,2$. We verify with Macaulay2 that $X_1:=\operatorname{Bl}_{A_1}(X_2)\cong H_5$ and $X:=\operatorname{Bl}_{A_1,A_2}(X_2)\cong K_3$ are Fano. We get the following factorization of $f:X\to Y$.
 $$X\cong K_3 \xrightarrow{f_1} X_1 \cong H_5 \xrightarrow{f_2} X_2 \cong D_{16} \xrightarrow{g} Y\cong \mathbb{P}_{\mathbb{P}^2}(\mathcal{O}\oplus \mathcal{O}(2)).$$
 
 \item The conic bundle structure on $X:= K_4 \cong  \operatorname{Bl}_{p_1,p_2,p_3}(\mathbb{P}^2) \times \mathbb{P}^2$ is induced by the first factor as follows (see Remark \ref{case_product}).
 $$K_4 \cong  \operatorname{Bl}_{p_1,p_2,p_3}(\mathbb{P}^2) \times \mathbb{P}^2 \xrightarrow{f_1} X_1 \cong  \operatorname{Bl}_{p_1,p_2}(\mathbb{P}^2) \times \mathbb{P}^2 \xrightarrow{f_2} X_2 \cong \mathbb{F}_1 \times \mathbb{P}^2  \xrightarrow{g} Y\cong \mathbb{P}^1 \times \mathbb{P}^2.$$
\end{enumerate}
\end{exem}

\subsection*{Case of $\rho_X=6$.}

Given a Fano conic bundle $f\colon X\to Y$ with $\rho_X = 6$ and $\rho_{X}-\rho_{Y}=3$, we know all possible targets $Y$ using Proposition \ref{prop1} $(b)$.  The following examples show that all of these Fano 3-folds $Y$ appears as targets of some toric Fano conic bundle $f\colon X\to Y$ with $\rho_X-\rho_Y = 3$.  

\begin{exem}\label{Example: Toric Picard 6}
We proceed as in Examples \ref{Example: Toric Picard 5}. Let us denote by $\{e_1,e_2,e_3,e_4\}$ the canonical basis of $N_\mathbb{R}\cong \mathbb{R}^4$. 
\begin{enumerate}
 \item Let $X_2:=L_1 \cong \mathbb{P}_{\mathbb{P}^1 \times \mathbb{P}^1 \times \mathbb{P}^1}(\mathcal{O}\oplus \mathcal{O}(1,1,1))$. The primitive generators of the rays of the fan of $X_2$ are given by
 \begin{equation*}\begin{split}\Delta_{X_2}(1)=\{ & u_1 = e_1, u_2 = e_2, u_3 = e_1-e_2, u_4=e_3,\\ & u_5=e_1 - e_3,u_6 = e_4,u_7=e_1 - e_4,u_8=-e_1 \}. \end{split}\end{equation*}
 The elementary conic bundle $g:L_1 \to\mathbb{P}^1 \times \mathbb{P}^1 \times \mathbb{P}^1$ is induced by the quotient map ${G:\mathbb{R}^4 \to \mathbb{R}^4\slash \langle e_1 \rangle}$. Let $A_1=V(u_2,u_8), A_2=V(u_3,u_8)$, where $A_i \cong \mathbb{P}^1\times \mathbb{P}^1$ for $i=1,2$.  We verify with Macaulay2 that $X_1:=\operatorname{Bl}_{A_3}(X_1) \cong \operatorname{Bl}_{A_2}(X_2)\cong Q_3$ and $X:=\operatorname{Bl}_{A_1,A_2}(X_2)\cong U_1$ are Fano. We get the following factorization of $f:X\to Y$.
 $$X\cong U_1 \xrightarrow{f_1} X_1 \cong Q_3 \xrightarrow{f_2} X_2 \cong L_{1} \xrightarrow{g} Y \cong \mathbb{P}^1 \times \mathbb{P}^1 \times \mathbb{P}^1. $$
 
 \item Let $X_2:=L_2\cong \mathbb{P}_Y(\mathcal{E})$, where $\mathcal{E}$ is a decomposable rank 2 vector bundle on $Y\cong \mathbb{P}_{\mathbb{P}^1 \times \mathbb{P}^1}(\mathcal{O}(-1,-1)\oplus \mathcal{O})$. The primitive generators of the rays of the fan of $X_2$ are given by
 \begin{equation*}\begin{split}\Delta_{X_2}(1)=\{ & u_1 = e_1, u_2 = e_2, u_3 = e_1 - e_2, u_4=e_3,\\ & u_5= e_1 - e_2 - e_3,u_6 = e_4,u_7=e_1 - e_2 - e_4,u_8=-e_1 \}. \end{split}\end{equation*}
 The elementary conic bundle $g:L_2 \to \mathbb{P}_{\mathbb{P}^1 \times \mathbb{P}^1}(\mathcal{O}(-1,-1)\oplus \mathcal{O})$ is induced by the quotient map $G:\mathbb{R}^4 \to \mathbb{R}^4\slash \langle e_1 \rangle$. Let $A_1=V(u_2,u_8), A_2=V(u_3,u_8)$, where $A_i \cong \mathbb{P}^1\times \mathbb{P}^1$ for $i=1,2$. We verify with Macaulay2 that $X_1:=\operatorname{Bl}_{A_1}(X_2)\cong Q_{13}$ and $X:=\operatorname{Bl}_{A_1,A_2}(X_2)\cong U_1$ are Fano.  We get the following factorization of $f:X\to Y$.
 $$X\cong U_1 \xrightarrow{f_1} X_1 \cong Q_{13} \xrightarrow{f_2} X_2 \cong L_{2} \xrightarrow{g} Y\cong \mathbb{P}_{\mathbb{P}^1 \times \mathbb{P}^1}(\mathcal{O}(-1,-1)\oplus \mathcal{O}).$$
 
 \item Let $X_2:=L_3\cong \mathbb{P}_Y(\mathcal{E})$, where $\mathcal{E}$ is a decomposable rank 2 vector bundle on $Y\cong \mathbb{F}_1 \times \mathbb{P}^1$. The primitive generators of the rays of the fan of $X_2$ are given by
 \begin{equation*}\begin{split}\Delta_{X_2}(1)=\{ & u_1 = e_1, u_2 = e_2, u_3 = e_1-e_2, u_4=e_3,\\ & u_5=e_1-e_3,u_6 = e_4,u_7=e_3-e_4,u_8=-e_1 \}. \end{split}\end{equation*}
 The elementary conic bundle $g\colon L_3 \to \mathbb{F}_1 \times \mathbb{P}^1$ is induced by the quotient map $G:\mathbb{R}^4 \to \mathbb{R}^4\slash \langle e_1 \rangle$. Let $A_1 = V(u_2,u_8), A_2=V(u_3,u_8)$, where $A_i \cong \mathbb{F}_1$ for $i=1,2$. We verify with Macaulay2 that $X_1:=\operatorname{Bl}_{A_1}(X_2)\cong Q_{5}$ and $X:=\operatorname{Bl}_{A_1,A_2}(X_2)\cong U_2$ are Fano. We get the following factorization of $f:X\to Y$.
 $$X\cong U_2 \xrightarrow{f_1} X_1 \cong Q_5 \xrightarrow{f_2} X_2 \cong L_3 \xrightarrow{g} Y\cong \mathbb{F}_1 \times \mathbb{P}^1.$$
 
 \item Let $X_2:=L_{11} \cong \mathbb{P}^1 \times \mathbb{P}_{\mathbb{P}^1\times \mathbb{P}^1}(\mathcal{O}(0,-1)\oplus \mathcal{O}(-1,0))$. The primitive generators of the rays of the fan of $X_2$ are given by\footnote{Let us remark that there is a misprint in \cite[\S 4]{BAT99}: the variety $L_{11}$ is isomorphic to $\mathbb{P}^1\times V(\mathcal{C}_5)$ in Batyrev's notation, i.e. $L_{11} \cong \mathbb{P}^1 \times \mathbb{P}_{\mathbb{P}^1\times \mathbb{P}^1}(\mathcal{O}(0,-1)\oplus \mathcal{O}(-1,0))$.}
 \begin{equation*}\begin{split}\Delta_{X_2}(1)=\{ & u_1 = e_1, u_2 = e_2, u_3 = -e_2, u_4=e_3,\\ & u_5=-e_2 - e_3,u_6 = e_4,u_7=e_2 - e_4,u_8=-e_1 \}. \end{split}\end{equation*}
 The elementary conic bundle $g:L_{11} \to \mathbb{P}^1 \times \mathbb{P}_{\mathbb{P}^1\times \mathbb{P}^1}(\mathcal{O}(0,-1)\oplus \mathcal{O}(-1,0)) $ is induced by the quotient map $G:\mathbb{R}^4 \to \mathbb{R}^4\slash \langle e_1 \rangle$. Let $A_1 = V(u_1,u_3), A_2=V(u_2,u_8)$, where $A_i \cong \mathbb{P}^1 \times \mathbb{P}^1$ for $i=1,2$. We verify with Macaulay2 that $X_1:=\operatorname{Bl}_{A_1}(X_2)\cong Q_{16}$ and $X:=\operatorname{Bl}_{A_1,A_2}(X_2)\cong U_8$ are Fano. We get the following factorization of $f\colon X\to Y$.
 $$X\cong U_8 \xrightarrow{f_1} X_1 \cong Q_{16} \xrightarrow{f_2} X_2 \cong L_{11} \xrightarrow{g} Y \cong \mathbb{P}_{\mathbb{P}^1\times \mathbb{P}^1}(\mathcal{O}(0,-1)\oplus \mathcal{O}(-1,0)).$$
\end{enumerate}
\end{exem}

\begin{rema}\label{Remark: Toric product}
As pointed out in Remark \ref{case_product}, the case of products of two del Pezzo surfaces is well-understood. In the toric case, there are examples given by $f\colon S_{1}\times \mathbb{P}^1\times \mathbb{P}^1\to \mathbb{P}^1\times \mathbb{P}^1\times \mathbb{P}^1$, and by $f\colon S_{1}\times \mathbb{P}^2\to \mathbb{P}^1\times \mathbb{P}^2$ both induced by a conic bundle $S_{1}\to \mathbb{P}^1$ with $\rho_{S_{1}}=4$.
\end{rema}

\begin{rema}
Notice that in all known examples of Fano conic bundle $f\colon X\to Y$ which satisfy assumptions of Theorem \ref{main_thm}, $X$ is a toric variety. Taking into account Corollary \ref{cor_1}, having some examples with $X$ non toric could be interesting from the viewpoint of rationality problem for conic bundles. We refer the reader to the recent article \cite{PRO17} for a survey on this problem.
\end{rema}

\section*{Acknowledgements}

This collaboration started during the conference "New Advances in Fano {Manifolds}" held in the University of Cambridge. Part of this work was developed during the first author's visit at MIMUW, University of Warsaw. 

We would like to thank both institutions for their support and hospitality, and to the organizers of the conference for making this event successful. We are very grateful to Cinzia \textsc{Casagrande}, Grzegorz \textsc{Kapustka} and Jaros{\l}aw \textsc{Wi\'sniewski} for many valuable conversations. 

We also express our thanks to the anonymous referees for their careful reading of our paper, and for the important suggestions which have allowed us to improve it. 

The first author was founded by Hua Loo-Keng Center for Mathematical Sciences, AMSS, CAS (National Natural Science Foundation of China No. 11688101). The second author was supported by Polish National Science Center grants 2013/08/A/ST1/00804 and 2016/23/G/ST1/04828.


\end{document}